\documentclass[11pt]{article}

\usepackage[utf8]{inputenc}

\usepackage{csquotes}
% \usepackage[style=alphabetic, maxbibnames=9, maxcitenames=4,  backend=bibtex, 
% doi=false, isbn=false, url=false]{biblatex}
\usepackage{cite}
% \addbibresource{literature}

\usepackage[english]{babel}
\usepackage{amsmath}
\usepackage{amsthm}
\usepackage{amssymb}

\usepackage[titles]{tocloft}
\setlength{\cftbeforesecskip}{0.5ex}

%------------------------------------------------------------------------------------------
%-------------------------------------------LAYOUT-----------------------------------------
%------------------------------------------------------------------------------------------

%page size
\usepackage{geometry}
%middle margins:
\geometry{verbose,a4paper,tmargin=1in,bmargin=1in,lmargin=1in,rmargin=1in}

\clubpenalty = 10000 
\widowpenalty = 10000 
\displaywidowpenalty = 10000

%Links
\usepackage{hyperref}

%------------------------------------------------------------
%--------------------NEXT FILE-------------------------------
%------------------------------------------------------------

%Symbols
\usepackage{dsfont}
\usepackage{wasysym}

%Diagramme
\usepackage{tikz}
\usepackage{tikz-cd}

\usepackage[titles]{tocloft}
\setlength{\cftbeforesecskip}{0.5ex}

%Links
\usepackage{hyperref}

%------------------------------------------------------------------------------------------
%-----------------------------------------THEOREMS-----------------------------------------
%------------------------------------------------------------------------------------------
\newtheorem{thm}{Theorem}[section]
\newtheorem*{thm*}{Theorem}
\newtheorem{theorem}{Theorem}

\newtheorem{corA}[theorem]{Corollary}

\newtheorem{conj}[thm]{Conjecture}

\newtheorem{warning}[thm]{Warning}
\newtheorem{proposition}[thm]{Proposition}

\newtheorem{lem}[thm]{Lemma}
\newtheorem{cor}[thm]{Corollary}

\theoremstyle{definition}

\newtheorem{defn}[thm]{Definition}

\theoremstyle{remark}
\newtheorem{remark}[thm]{\textbf{Remark}}
\newtheorem{construction}[thm]{\textsc{Construction}}
\newtheorem{example}[thm]{\textsc{Example}}

\newtheoremstyle{note}
{0pt}%Space above
{0pt}%Space below
{}%Body font
{}%Indent amount
{\bfseries}%Theorem head font
{.}%Punctuation after theorem head
{.5em}%Space after theorem head
{}%Theorem head spec(can be left empty, meaning ‘normal’)

\theoremstyle{note}
\newtheorem{nrem}[thm]{}

%------------------------------------------------------------------------------------------
%-----------------------------------------COMMANDS-----------------------------------------
%------------------------------------------------------------------------------------------

%Number spaces
\newcommand{\IN}{\mathbb{N}}
\newcommand{\IZ}{\mathbb{Z}}

\newcommand{\IR}{\mathbb{R}}
\newcommand{\IC}{\mathbb{C}}

\newcommand{\IS}{\mathbb{S}}
\newcommand{\IT}{\mathbb{T}}
\newcommand{\II}{\mathbb{I}}

%\newcommand{\Zt}{{\IZ\!_{/2}}}

%Categories
\newcommand{\Cat}{\mit{Cat}}

\newcommand{\Cati}{\Cat_\infty}
\newcommand{\Catit}{\Cati^\ot}

\newcommand{\Top}{{\mit{T\!op}}}

\newcommand{\Diff}{{\op{Diff}}}
\newcommand{\Emb}{\mi{Emb}}

\newcommand{\Sets}{{\mathit{Set}}}
\newcommand{\Spc}{{\mathit{Spc}}}

%operators
\newcommand{\op}[1]{\operatorname{#1}}
\newcommand{\id}{\op{id}}

\newcommand{\pr}{\op{pr}}

\newcommand{\Tr}{\op{Tr}}
\newcommand{\Map}{\op{Map}}
\newcommand{\Hom}{\op{Hom}}
\newcommand{\Aut}{\op{Aut}}
\newcommand{\End}{\op{End}}

\newcommand{\colim}{\operatornamewithlimits{colim}}

\newcommand{\obj}{\op{obj}}

%mi-operators

\newcommand{\Sc}{\mi{Sc}}

%Greek letters
\newcommand{\eps}{\varepsilon}
\newcommand{\ga}{\alpha}
\newcommand{\gb}{\beta}
\newcommand{\gd}{\delta}
\newcommand{\gD}{\Delta}
\newcommand{\gc}{\gamma}
\newcommand{\gC}{\Gamma}
\newcommand{\gi}{\iota}

\newcommand{\gl}{\lambda}
\newcommand{\gL}{\Lambda}

\newcommand{\gS}{\Sigma}
\newcommand{\gt}{\theta}

\newcommand{\gO}{\Omega}
\newcommand{\gp}{\varphi}

%varGreek

%CAT

%Mathit
\newcommand{\mi}[1]{\mathit{#1}}
%MathFrak

%2-cats

%Mathbf

%MathCal
\newcommand{\mc}[1]{\mathcal{#1}}
\newcommand{\mcA}{\mc{A}}

\newcommand{\mcC}{\mc{C}}
\newcommand{\mcD}{\mc{D}}
\newcommand{\mcE}{\mc{E}}

\newcommand{\mcL}{\mc{L}}

\newcommand{\mcP}{\mc{P}}

\newcommand{\mcZ}{\mc{Z}}
\newcommand{\sP}{\mc{P}}%simplicial presheaves

%point-set topology

\renewcommand{\d}[1]{\op{d}\!#1\;}

%arrows
\newcommand{\inj}{\hookrightarrow}

\newcommand{\curveto}{\curvearrowright}

%etc

\newcommand{\ot}{\otimes}

\newcommand{\cd}{\bullet}
\newcommand{\1}{\mathds{1}}

% Complete Segal Spaces
\renewcommand{\c}[1]{\mathtt{#1}}

\newcommand{\cC}{\c{C}}
\newcommand{\cD}{\c{D}}
\newcommand{\cE}{\c{E}}

\newcommand{\CSS}{\c{CSS}}
\newcommand{\Fun}{\c{Fun}_\infty}

\newcommand{\Funo}[1]{\Fun^{\ot}\!\left(#1\right)}

\newcommand{\Vect}{\c{Vect}}

\newcommand{\Cob}{\mi{Cob}}
\newcommand{\Mfd}{\c{Mfd}}

\newcommand{\Bordn}[1]{\c{Bord}_{ #1 }}
\newcommand{\Bone}{\Bordn{1}}

\newcommand{\Bor}{\Bordn{1}^{\op{or}}}
\newcommand{\Bfrn}[1]{\Bordn{#1}^{\op{fr}}}
\newcommand{\Born}[1]{\Bordn{#1}^{\op{or}}}
\newcommand{\Bormn}[1]{\Bordn{#1}^{\op{or},\op{m}}}

\newcommand{\Bfr}{\Bone^{\op{fr}}}

\newcommand{\Borm}{\Bone^{\op{or}, \op{m}}}

\newcommand{\ev}{\c{ev}}
\newcommand{\co}{\c{co}}

\newcommand{\fd}{{\op{fd}}}
\newcommand{\EPi}{\mi{EP}^\infty\!}

%text commands 
\newcommand{\icat}{$(\infty,1)$-category{}}
\newcommand{\icats}{{$(\infty,1)$-categories}}

\newcommand{\qand}{\quad \text{and} \quad}

\newcommand{\blank}{\underline{\ \ }}
\newcommand{\gle}[1]{\langle #1 \rangle}

\newcommand{\cube}{(-1,1)^\infty}

\usepackage{mathrsfs}

\newcommand{\cc}[1]{\mathtt{#1}}

\title{The space of traces in symmetric monoidal infinity categories}
\author{Jan Steinebrunner}

%--------RENEWing commands----------------
\renewcommand{\EPi}{\op{Free}_{E_\infty}\!}
\renewcommand{\Spc}{\c{Spc}}
\renewcommand{\Cati}{\c{Cat}_\infty}
\renewcommand{\sP}{\c{P}}
\newcommand{\Sp}{\c{Sp}}

\renewcommand{\icat}{$\infty$-category}
\renewcommand{\icats}{{$\infty$-categories}}

\newcommand{\CoZ}{\mi{Cob}_1^\IZ}
\newcommand{\CoN}{\mi{Cob}_1^\IN}

\newcommand{\Conf}{\mi{Conf}}

\newcommand{\TTo}{\mathscr{T}^r}
\newcommand{\eTTo}{\mathscr{T}}
\newcommand{\pTTo}{\mathscr{T}^{(r)}}
\newcommand{\TTi}{\mathscr{T}_\infty^r}
\newcommand{\eTTi}{\mathscr{T}_\infty}
\newcommand{\pTTi}{\mathscr{T}_\infty^{(r)}}
\newcommand{\gT}{\Theta}

\begin{document}

%---------------------------------------------------
%--------------BEGIN OF SOURCE FILE-----------------
%---------------------------------------------------

\maketitle

\begin{abstract}
    We define a \emph{tracelike transformation} to be a natural family of conjugation
    invariant maps $T_{x,\cC}:\hom_\cC(x, x) \to \hom_\cC(\1,\1)$ 
    for all dualisable objects $x$ in any symmetric monoidal \icat\ $\cC$.
    This generalises the trace from linear algebra that assigns a scalar $\Tr(f) \in k$
    to any endomorphism $f:V \to V$ of a finite-dimensional $k$-vector space.
    Our main theorem computes the moduli space of tracelike transformations
    using the one-dimensional cobordism hypothesis
    with singularities.
    
    As a consequence we show that the trace $\Tr$ can be uniquely extended 
    to a tracelike transformation up to a contractible space of choices.
    This allows us to give several model-independent characterisations of 
    the $\infty$-categorical trace.
    Restricting our notion of tracelike transformations 
    from endomorphisms to automorphisms
    we in particular recover a theorem of To\"en and Vezzosi.
    
    Other examples of tracelike transformations are for instance given by 
    $f \mapsto \Tr(f^n)$. Unlike for $\Tr$ the relevant connected
    component of the moduli space is not contractible,
    but rather equivalent to $B\IZ/n\IZ$ or $BS^1$ for $n=0$.
    As a result we obtain a $\IZ/n\IZ$-action on $\Tr(f^n)$
    as well as a circle action on $\Tr(\id_x)$.
\end{abstract}

%--------------------------------------------------------------------
%--------------------------------------------------------------------
%--------------------------------------------------------------------

\section{Introduction}\label{sec:Introduction}

The trace, originally defined in the context of linear algebra, 
admits a well-known generalisation to arbitrary symmetric monoidal categories. 
Often the trace can be thought of as measuring `fixed-points' of an 
endomorphism, most prominently in the case of the stable homotopy category,
where it computes the Lefschetz fixed-point number, as we explain below.
Recently this has been vastly generalised to indexed, relative, and 
equivariant settings using bicategories.
We refer the reader to Ponto and Shulman's paper \cite{PS14}
for an introduction to traces from this perspective and further references.

For bordism categories the trace behaves like the `closing up' 
operation that sends a braid to its associated link. 
In the context of topological quantum field theories
this corresponds to taking the so-called `state-sum',
see for example Stolz and Teichner's work \cite{ST12}.
The example of bordism categories plays an important role later since,
by the cobordism hypothesis, bordism categories are the universal 
setting for taking traces.
This generalises the strategy of To\"en and Vezzosi \cite{TV15}.

Another example, which highlights the importance of treating the trace 
$\infty$-categorically, is the derived Morita category 
of ring spectra and bimodules. Here the trace is given by topological
Hochschild homology (THH). We will return to this example in the final section
where we show that in this case Theorem \ref{thm:As} induces 
a circle action on $\op{THH}(R)$ and a $\IZ/n\IZ$-action on
$\op{THH}(R; M \ot_R \dots \ot_R M)$.

\subsection*{Traces in symmetric monoidal $1$-categories}
Before we consider \icats, let us recall the trace in $1$-categories. 
The trace of an endomorphism $f:V\to V$ on a finite-dimensional 
$k$-vector space $V$ may be defined as 
\[
    \Tr(f):=\sum_{i\in I} \gb^i(f(b_i)) \in k,
\]
where $(b_i)_{i\in I}$ is a basis of $V$ and $(\gb^i)_{i\in I}$ 
is the dual basis of $V^*:=\hom_{\Vect_k}(V,k)$ 
uniquely characterised by $\gb^i(b_j)=\gd_{ij}$. 
Let us define the evaluation and coevaluation of $V$ by
\[
    e:V^*\ot V \to k, (\ga,v) \mapsto \ga(v) 
    \qand c:k \to V \ot V^*, \gl \mapsto \gl \sum
                                              b_i \ot \gb^i
\]
and rewrite the trace as the composition
\[
    \Tr(f): k \xrightarrow{c} V \ot V^* 
    \xrightarrow{f \ot \id_{V^*}} V \ot V^* 
    \xrightarrow{\cc{swap}_{V,V^*}} V^* \ot V 
    \xrightarrow{e} k.
\]
This definition generalises to all symmetric monoidal 
categories $(\mcC,\ot,\1)$:
an object $x\in \mcC$ is called \emph{dualisable}
if there is a dual $y\in \mcC$ with an evaluation $e:y\ot x\to\1$
and a coevaluation $c:\1 \to x\ot y$ satisfying:
\begin{equation}\label{eqn:snake}
    \id_x = (\id_x \ot\, e) \circ (c \ot \id_x) \qand \id_y 
    = (e \ot \id_y) \circ (\id_y \ot\, c).
\end{equation}
The data $(y,e,c)$ is essentially unique, if it exists.
For such $x$ the trace of $f:x \to x$ is defined as:
\begin{align*}
    \Tr_x(f) := e \circ \cc{swap}_{x,y} \circ (f \ot \id_y) \circ c 
        \in \hom_\mcC(\1,\1).
\end{align*}
\begin{defn}
    In analogy to the category of $k$-vector spaces we will refer 
    to endomorphisms of the unit object as \emph{scalars}.
    They form a commutative monoid and taking scalars defines a functor:
    \[
        \Sc: \Cat^\ot \to \mi{CMon}, \quad 
        \mcC \mapsto \Sc(\mcC) :=\hom_\mcC(\1,\1).
    \]
    To a symmetric monoidal functor $F:\mcC \to \mcD$ with 
    unitor $\gl_F:F(\1_\mcC) \cong \1_\mcD$ this assigns the map
    $\Sc(\mcC) \to \Sc(\mcD)$ sending $f$ to $\gl_F \circ F(f) \circ \gl_F^{-1}$.
\end{defn}

This generalisation of the trace retains the cyclicity of the trace
that is well-known in the context of linear algebra \cite[Proposition 2.4]{PS14}:
when $f:x \to z$ and $g:z \to x$ are morphisms 
between dualisable objects, then
\[
    \Tr_x(g \circ f) = \Tr_y(f \circ g).
\]
The main advantage of having such a general definition is that 
we are now able to compare traces in different symmetric monoidal 
categories \cite[Proposition 6.2]{PS14}. Let $F:\mcC \to \mcD$ be a symmetric monoidal functor,
$x \in \mcC$ dualisable, and $f:x \to x$ an endomorphism.
Then $F(x)$ is dualisable and the trace of $F(f)$ is
\begin{align}\label{eq:natsymmon}
    \Tr_{\mcD,F(x)}(F(f)) = F(\Tr_{\mcC,x}(f)).
\end{align}
An interesting example is the stable homotopy category $\mcC=h_1\Sp$
with the smash-product as symmetric monoidal structure.
Its scalars are homotopy classes of maps $\IS \to \IS$ and
these are classified by integers.
For a finite CW-complex $X$ the suspension spectrum $\gS^\infty_+ X$
is a dualisable object of $h_1 \Sp$
with dual the \emph{Spanier-Whitehead dual} of $X$.
A continuous map $f:X\to X$ induces an endomorphism $\gS^\infty_+ f$; 
its trace $\Tr(\gS^\infty_+ f) \in \Hom_{h_1\Sp}(\IS,\IS) \cong \IZ$ 
is the so-called Lefschetz-number $\gL(f)$ of $f$,
which can be understood in terms of fixed-points of $f$.
The functor that takes a spectrum to its cohomology with coefficients
in a field is symmetric monoidal by the K\"unneth-theorem. 
The naturality (\ref{eq:natsymmon}) implies that we can understand 
$\gL(f)$ in terms of the action $f$ has on the homology of $X$.
This is the Lefschetz fixed-point theorem. 

\subsection*{Axiomatic description: tracelike transformations}
Our axiomatisation of the properties of the trace is loosely based 
on Kelly and Laplaza's notion of a `trace function' from \cite{KL80}.
We discuss how they are related below.
The main difference is that we will study compatible choices of traces for 
\emph{all} symmetric monoidal categories at the same time:

\begin{defn}\label{defn:tt}
    A \emph{tracelike transformation} $T$ is a family of maps
    \[
        T_{\mcC,x}: \End_\mcC(x) \to \Sc(\mcC) = \End_\mcC(\1) 
    \]
    for all symmetric monoidal categories $\mcC$ and dualisable objects $x \in \mcC$
    satisfying the following axioms:
    \begin{itemize}
        \item \emph{Conjugation invariance:} 
            $T_y(\gp \circ e \circ \gp^{-1}) = T_x(e)$ 
            for all endomorphisms $e:x \to x$ and isomorphisms $\gp:x \cong y$.
        \item \emph{Naturality:}
            $ T_{\mcD,F(x)}(F(e)) = \gl_F \circ F(T_{\mcC,x}(e)) \circ \gl_F^{-1}$ 
            for every symmetric monoidal functor $F:\mcC \to \mcD$ 
            with unitor $\gl_F:F(\1_\mcC) \cong \1_\mcD$, 
            dualisable object $x \in \mcC$, and endomorphism $e:x \to x$.
    \end{itemize}
    A \emph{restricted tracelike transformation} is a family of maps
    $ T_{\mcC,x}: \Aut_\mcC(x) \to \Sc(\mcC) $ 
    satisfying the same axioms.
    We denote the set of tracelike transformations by $\eTTo$
    and the set of restricted tracelike transformations by $\TTo$.
\end{defn}

Since $\Sc(\mcC)$ is a commutative monoid for all symmetric monoidal categories $\mcC$ 
the sets $\eTTo$ and $\TTo$ inherit commutative monoid structures defined
by multiplying tracelike transformations pointwise.
Moreover, the multiplicative monoid $(\IN, \cdot)$ acts on these monoids
by maps $\mcP^n:\eTTo \to \eTTo$ defined as $\mcP^n(T)(e) := T(e^n)$.
Starting from the classical trace $\Tr \in \eTTo$ that we discussed above,
we can therefore construct, for every finite sequence of natural numbers
$k_1, \dots, k_n \in \IN$, a tracelike transformation:
\[
    \gT_x^{k_1,\dots,k_n}(e) 
    = \Tr_x(e^{k_1}) \circ \dots \circ \Tr_x(e^{k_n}).
\]
In fact, if we only want $\gT$ to be a restricted tracelike transformation,
we can allow the $k_i$ to be integers.

We now describe an equivalent definition of tracelike transformations:
Let $\Cat_1^\ot$ denote the category with objects symmetric monoidal categories
and morphisms natural isomorphism classes of symmetric monoidal functors.
We define a functor $E^\fd:\Cat_1^\ot \to \Sets$ by  
\[
    E^\fd(\mcC) = \coprod_{x \in \mcC \text{ dualisable}} \End_\mcC(x) \big/\sim
\]
where $(a:x \to x) \sim (b:y \to y)$ whenever there is an isomorphism 
$\gp:x \to y$ with $\gp \circ a \circ \gp^{-1} = b$.
Similarly, define $A^{\fd}(\mcC) \subset E^\fd(\mcC)$ 
as the subset of conjugacy classes represented by automorphisms.
By construction, a tracelike transformation is simply a natural transformation 
$E^\fd \Rightarrow \Sc$ of functors $\Cat_1^\ot \to \Sets$.
Similarly, a restricted tracelike transformation 
is a natural transformation $A^\fd \Rightarrow \Sc$.

Note that our $E^\fd(\mcC)$ is similar to Kelly and Lapaza's 
\emph{set of cycles} $[\mcD]$, which they define in \cite{KL80} for any category $\mcD$. 
If we let $\mcC^\fd \subset \mcC$ denote the full subcategory 
on the dualisable objects and $(\mcC^\fd)^\simeq \subset \mcC^\fd$ the maximal subgroupoid,
then there is a canonical surjection $E^\fd(\mcC) \to [\mcC^\fd]$,
and a canonical bijection $A^\fd(\mcC) \to [(\mcC^\fd)^\simeq]$.
The crucial difference is that they require a cyclicity condition
$T(f \circ g) = T(g \circ f)$ for any two morphisms $f:x \to y$ and $g:y \to x$,
whereas this only follows from our conjugation invariance if one of $f$ and $g$
is an isomorphism.

\subsection*{$1$-categorical classification of tracelike transformations}
We will show that the $\gT^{k_1,\dots,k_n}$ are indeed the only examples 
of tracelike transformations: 
a variant of the cobordism hypothesis in dimension $1$ implies
that certain cobordism categories $\CoN \subset \CoZ$ are freely generated 
by a dualisable object $*_+$ and $\ga: *_+ \to *_+$ 
an endomorphism or automorphism, respectively.
Hence there is a bijection between the set of tracelike transformations
and the scalars of $\mi{Cob}_1^\IN$.
These can be described as $\Mfd_1^\IN/\Diff^+$:
diffeomorphism classes of closed $1$-dimensional manifolds
labelled by natural numbers.
This in turn is in bijection with $\IN[x]$, 
the free commutative monoid on the set $\{1,x,x^2,\dots\}$.
In summary this leads to:
\begin{thm*}[see {\ref{prop:1class}}]
    There are compatible isomorphisms of commutative monoids
    \[
    \begin{tikzcd}
        \eTTo \ar[r, "\cong"] \ar[d]
        & \Sc(\mi{Cob}_1^\IN) \ar[r, "\cong"] \ar[d]
        & \Mfd_1^\IN/\Diff^+ \ar[r, "\cong"] \ar[d]
        & \IN[x] \ar[d] \\
        \TTo \ar[r, "\cong"] 
        & \Sc(\mi{Cob}_1^\IZ) \ar[r, "\cong"] 
        & \Mfd_1^\IZ/\Diff^+ \ar[r, "\cong"] 
        & \IN[x^{\pm 1}] 
    \end{tikzcd}
    \]
    and the tracelike transformation $\gT^{k_1,\dots,k_n}$ is sent to 
    $\sum_{i=1}^n x^{k_i}$.
\end{thm*}

\subsection*{Traces in \icats}
If $\cC$ is a symmetric monoidal \icat\, then its scalars $\Sc(\cC)=\hom_\cC(\1,\1)$ 
are no longer just a set, but a space.
In fact, they naturally carry the structure of an $E_\infty$-space.
Consider for instance the \icat\ of spectra $\cC =\Sp$:
the endomorphisms of the unit object are:
$\Sc(\Sp) = \hom_{\Sp}(\IS,\IS) = \gO^\infty \IS$.
For a finite CW-complex $X$ the $1$-categorical trace on the homotopy category
$h_1\Sp$ can be promoted to a map of spaces:
\[
    \Tr: \hom_{\Sp}(\gS^\infty_+ X, \gS^\infty_+ X) \to \gO^\infty \IS.
\]
In definition \ref{defn:functors} and \ref{defn:functors-symmon}
we will construct $\infty$-functors $\Sc$, $\mcE^\fd$, and $\mcA^\fd$ 
from the $\infty$-category of symmetric monoidal $\infty$-categories $\Catit$
to the \icat\ of spaces $\Spc$.
For every symmetric monoidal \icat\ $\cC$ and dualisable object $x \in \cC$
there are compatible maps $\hom_\cC(x, x) \to \mcE^\fd(\cC)$
and $\mi{hAut}_\cC(x) \to \mcA^\fd(\cC)$.
\begin{defn}
    An $\infty$-categorical \emph{tracelike transformation} 
    is a natural transformation $T:\mcE^{\fd} \Rightarrow \Sc$ of 
    $\infty$-functors $\Catit \to \Spc$.
    Similarly, a \emph{restricted tracelike transformation} 
    is a natural transformation $T:\mcA^{\fd} \Rightarrow \Sc$.
    We denote the space of (restricted) $\infty$-categorical 
    tracelike transformations by $\pTTi$.
\end{defn}
The $\infty$-functors $\Sc$, $\mcE^\fd$, and $\mcA^\fd$ recover their
$1$-categorical analogues on the homotopy category $h_1\cC$ of $\cC$
in the sense that $\pi_0 \Sc(\cC) \cong \Sc(h_1 \cC)$,
$\pi_0 \mcE^\fd(\cC) \cong E^\fd(h_1\cC)$, and
$\pi_0 \mcA^\fd(\cC) \cong A^\fd(h_1\cC)$, see \ref{lem:recov}.
For any tracelike transformation $T$, symmetric monoidal \icat\ $\cC$
and dualisable object $x \in \cC^\fd$ naturality with respect to the functor 
$\cC \to h_1\cC$ yields the following commutative diagram: 
\[
    \begin{tikzcd}
        \hom_\cC(x, x) \ar[r] \ar[d] 
        & \mcE^\fd(\cC) \ar[r, "T_\cC"] \ar[d] 
        & \Sc(\cC) \ar[d] \\
        \hom_{h_1\cC}(x, x) 
        \ar[r] 
        & E^\fd(h_1\cC) \ar[r, "T_{h_1\cC}"] 
        & \Sc(h_1\cC) 
    \end{tikzcd}
\]
Here we implicitly used the map $\eTTi \to \eTTo$ defined by sending 
an $\infty$-categorical tracelike transformation to its restriction 
to $1$-categories.
This map encodes how an $\infty$-categorical tracelike transformation
behaves on the level of connected components.

Our main theorem describes the homotopy-type of $\eTTi$ 
and the map to the discrete set $\eTTo$.
Generalising the commutative monoid structure on $\pTTo$
there is an $E_\infty$-algebra structure $\pTTi$.
For simplicity, we will only identify the underlying space.
To state the theorem, let $\EPi(X)$ denote underlying space 
of the free $E_\infty$-algebra on $X$.

\begin{theorem}[{\ref{thm:Ap}}]\label{thm:As}
    There is a commutative diagram of spaces:
    \[
        \begin{tikzcd}[row sep=tiny, column sep=small]
            \eTTi \ar[dd] \ar[rr, "\simeq"] \ar[rd] & &
            \EPi(BS^1 \amalg \coprod_{k\ge 1}B\IZ/k\IZ)\ar[dd] \ar[rd] & \\
            & \TTi \ar[dd] \ar[rr, "\simeq", near start] & &
            \EPi(S^1 \times BS^1 \amalg 
                    \coprod_{k\in \IZ\setminus\{0\}}B\IZ/k\IZ)\ar[dd] \\
            \eTTo \ar[rr, "\cong"] \ar[rd] & & \IN[x] \ar[rd] & \\
            & \TTo \ar[rr, "\cong"] & & \IN[x^{\pm 1}]
        \end{tikzcd}
    \]
    where the horizontal maps are equivalences.
    Moreover, the vertical maps identify the sets in the bottom layer
    as the set of connected components of the top layer: 
    $\eTTo \cong \pi_0 \eTTi$ and $\TTo \cong \pi_0 \TTi$.
\end{theorem}

\begin{warning}\label{warning}
    The reader should be warned that in identifying $\eTTi$ we use the
    \emph{cobordism hypothesis with singularities}, which was sketched in \cite{LurCH}, 
    but a full proof of which has not yet appeared in the literature.
    We will therefore be treating it as a conjecture and all our statements
    about $\eTTi$ are dependent on this conjecture.
    Note, however, that the analogous statements about the space of
    \emph{restricted} tracelike transformations $\TTi$ only use the 
    standard cobordism hypothesis in dimension $1$, which has been proved in detail. 
    (See \cite{LurCH} and \cite{Har12}.)
\end{warning}

Of particular interest is the homotopy-type of the moduli space 
of those $\infty$-categorical tracelike transformations 
that recover the classical trace on homotopy categories.
In their work on the derived Chern-character \cite{TV15} To\"en and Vezzosi 
show that for restricted tracelike transformations the space is contractible
and that therefore there is an essentially unique $\infty$-categorical
generalisation of the trace when applied to automorphisms.

In theorem \ref{thm:As} we use methods similar to theirs to compute 
the full homotopy-type of $\pTTi$. 
Their result can now be read off by considering
the fibre of $\TTi \to \TTo$ over the classical trace.
Since we study $\eTTi$ as well we can now generalise their result,
removing the artificial restriction to automorphisms.
\begin{corA}[{Generalising \cite[Théorème 3.18]{TV15}, \ref{cor:Bp}}]\label{cor:Bs}
    The space of (restricted) $\infty$-categorical tracelike transformations 
    that act as the $1$-categorical trace $\Tr$ on homotopy categories is contractible.
\end{corA}

Using our knowledge of the structure of $\pi_0\eTTi$ it is in fact not hard to see
that to uniquely characterise the $\infty$-categorical trace we only need to specify
its behaviour on the category of vector spaces.
\begin{corA}[\ref{cor:Cp}]\label{cor:Cs}
    Any $\infty$-categorical tracelike transformation
    whose value on the category of complex vector spaces agrees 
    with the trace from linear algebra is canonically equivalent
    to the $\infty$-categorical trace.
\end{corA}

More informally, there is a unique extension of the trace 
from linear algebra to a family of maps
\[
    \Tr_{(x,\cC)}: \End_\cC(x) \to \hom_\cC(\1,\1)
\]
for any symmetric monoidal \icat\ $\cC$ and any dualisable object 
$x\in\cC$ while preserving the conjugation invariance of the trace 
and its naturality with respect to symmetric monoidal functors.
    
We can also give an alternative characterisation of $\Tr$ 
as the unique \emph{generating tracelike transformation}.
This characterisation is purely categorical and does not
require one to first define a trace for vector spaces.

\begin{defn}
    The monoid $(\IN,\cdot)$ acts on $\eTTo$ and $\eTTi$ by
    taking powers of any morphism before applying the tracelike transformation 
    $\mcP^n(T)(a) := T(a^n)$.
    We call a tracelike transformation $T \in \eTTi$ \emph{generating}
    if the monoid $\pi_0 \eTTi$ is generated by the orbit of $T$ under the $\IN$-action.
\end{defn}

In other words, $T$ is generating if every other tracelike transformation 
$S \in \eTTi$ is equivalent to one of the form 
$\mcP^{k_1}(T) \circ \dots \circ \mcP^{k_n}(T)$.

\begin{corA}[{\ref{cor:Dp}}]\label{cor:Ds}
    The space of generating tracelike transformations in $\eTTi$ 
    is contractible and its image in $\eTTo$ is the usual trace.
\end{corA}

\subsection*{Notation}
We will assume that the reader has a convenient model of
$(\infty,1)$-categories at hand.
In this paper we will be working in the context of Joyal's 
quasicategories, but really any equivalent Cartesian closed
$\infty$-cosmos in the sense of Riehl and Verity will do.
We will refer to these $(\infty,1)$-categories as \icats\ 
and to morphisms between them as $\infty$-functors, 
or sometimes just as functors.

    We write $\Cati$ for the \icat\ of \icats\ and
    $\Spc \subset \Cati$ for the full subcategory of $\infty$-groupoids,
    which we will refer to synonymously as `spaces'.
    The $1$-category of topological spaces will be denoted by $\Top$.
    There is a functor $\Top \to \Spc$ that 
    `forgets the point-set information', 
    it sends a topological space to its $\infty$-groupoid of paths.

    For \icats\ $\cC$, $\cD$, $\cE$ we denote the \icat\ 
    of functors from $\cC$ to $\cD$ by $\Fun(\cC,\cD)$ 
    and the maximal subgroupoid of $\cE$ by $\cE^\sim \in \Spc$.
    For objects $a,b\in \cE$ the space of morphisms 
    from $a$ to $b$ is $\hom_\cE(a,b)$.
    In the case $\cE=\Cati$ the space of functors from $\cC$ to $\cD$ 
    is $\hom_{\Cati}(\cC,\cD)\simeq (\Fun(\cC,\cD))^\sim$.

    The \icat\ of (simplicial) presheaves on $\cC$ is 
    $\sP(\cC) := \Fun(\cC^{op}, \Spc)$.
    The Yoneda embedding will be denoted by $Y: \cC \to \sP(\cC)$.

\subsection*{Structure of the paper}
We begin by recalling complete Segal spaces 
and other $\infty$-categorical tools in section $2$,
where we also define the functors $\mcE^\fd$ and $\mcA^\fd$.
Then, in section $3$, we define concrete models of the one-dimensional 
bordism category with and without marked points and 
show that they define symmetric monoidal \icats.
Using these we formulate the cobordism hypothesis,
as well as a variant with singularities.
In section $4$ we complete the proof of the classification result 
for $1$-categorical tracelike transformations.
Section $5$ contains the homotopy-theoretic computations 
and the proofs of the main theorems.
In the final section we discuss how non-contractible connected components 
of the moduli space $\eTTi$ induce group actions on the trace.

\subsection*{Acknowledgements}
I would like to thank my advisor Ulrike Tillmann for introducing me 
to the world of bordism categories 
and for her support throughout the various stages of this paper.
My thanks also goes to Luciana Basualdo Bonatto for her comments.
I would also like to thank the referee for encouraging me to include the case of
un-restricted tracelike transformations when I first submitted the paper
and for their patience during the reviewing process.

I am very grateful for the support by St. John’s College, Oxford through the 
\emph{Ioan and Rosemary James Scholarship}, and the EPSRC grant no.\ 1941474.

%--------------------------------------------------------------------
%--------------------------------------------------------------------
%--------------------------------------------------------------------

\section[Modelling symmetric monoidal infinity categories]
{Modelling symmetric monoidal \icats}\label{sec:Modelling}
Much of the literature on the $\infty$-categorical structure 
of the cobordism category, in particular Lurie's work on the cobordism 
hypothesis \cite{LurCH}, is formulated in terms of $\gC$-objects in
complete Segal spaces.
In this section we recall how to relate this approach to the more 
standard theory of quasicategories, by giving a model independent
description of complete Segal spaces in the language of \icats.
Once this is established we make precise the functors $\mcL^\fd$,
$\mcA^\fd$, and $\Sc$ from the introduction.

\subsection{Complete Segal spaces}

\begin{nrem}
    We let $\Cat_1$ denote the $1$-category of $1$-categories.
    It admits a functor to the \icat\ of \icats\ $\Cat_1 \to \Cati$;
    in the quasicategory model this sends a category to its nerve.
    Write $\cc{Cat}_1 \subset \Cati$ for the essential image of the inclusion.
    We will think of this as the \icat\ of $1$-categories.
    This in fact is also a $(2,1)$-category:
    the hom-spaces of $\cc{Cat}_1$ are the $1$-groupoids 
    of $1$-functors and natural isomorphisms:
    \[
        \hom_{\cc{Cat}_1}(\mcC, \mcD) 
        \cong \left(\mi{Fun}_1(\mcC,\mcD)\right)^\sim.
    \]
    The inclusion $\cc{Cat}_1 \to \Cati$ has a left-adjoint,
    the homotopy-category functor 
    \[
        h_1:\Cati \to \cc{Cat}_1
    \]
    and in this sense $\cc{Cat}_1$ is a localisation of $\Cati$.
\end{nrem}

\begin{nrem}
    We define the simplex category as the full subcategory 
    $\gD \subset \cc{Cat}_1$ generated by the partially ordered sets 
    $[n]= \{0 \le \dots \le n\}$ thought of as categories, for $n\ge 0$.
	Using the embedding $I:\gD \to \Cati$ we obtain 
	\[
		N: \Cati \xrightarrow{Y} \sP(\Cati) 
            \xrightarrow{I^*} \sP(\gD).
	\]
    Theorem \ref{thm:CSS} states that this functor is fully faithful. 
    This is a way of saying that the objects $[n]$ generate 
    $\Cati$ \emph{strongly} under colimits.
    The essential image of $N$ are the complete Segal spaces:
\end{nrem}

\begin{defn}
    A simplicial space $X\in\sP(\gD)$ satisfies the \emph{Segal condition}
    if for all $n \ge 2$ the map 
    \[
        (\gl_0^*, \dots, \gl_{n-1}^*): X_n \longrightarrow X_1 \times_{X_0} \dots \times_{X_0} X_1
    \]
	induced by $\gl_i:[1] \to [n]$ with $\gl_i(k) = i+k$
	is an equivalence.
\end{defn}

\begin{defn}\label{defn:complete}
	Let $\II\in\Cat_1$ be the contractible groupoid with two objects 
    and $*\in \Cat_1$ the discrete category with one object.
    Write $N(\II)$ and $N(*)$ for the simplicial sets 
    that are the nerves of these categories and interpret them as 
    simplicial spaces that are discrete in every layer.%
    \footnote{This construction defines a functor
        $N:\Cat_1 \to \sP(\gD)$,
        but this functor does not preserve categorical equivalences:
        $\II$ and $*$ are equivalent, but $N(\II)$ and $N(*)$ are not.
        The functor $N:\Cati \to \sP(\gD)$ preserves equivalences 
        by construction, and simplicial spaces in its image will be 
        complete as they 
        `cannot see the difference between $\II$ and $*$'.
    }
    A simplicial space $X \in \sP(\gD)$ is called \emph{complete} 
    if the natural map
	\[
		X_0 \cong \hom_{\sP(\gD)}(N(*), X) 
            \to \hom_{\sP(\gD)}(N(\II), X) 
	\]
	coming from the $1$-functor $\II \to *$ is an equivalence. 
\end{defn}

\begin{defn}
	The \icat\ of \emph{complete Segal spaces} is defined as the 
    full subcategory $\CSS \subset \sP(\gD)$ spanned by the objects 
    that are complete and satisfy the Segal condition.
\end{defn}

\begin{remark}\label{rem:Rezk-complete}
    Given a Segal space $X$ there also is a simpler characterisation 
    of the completeness condition due to Rezk. By \cite[Theorem 6.2]{Rez01}
    the space $\hom_{\sP(\gD)}(N(\II), X)$ is equivalent to the subspace
    $X_1^{eq} \subset X_1$ on those $1$-simplices that represent 
    an isomorphism in the homotopy category $h_1 X$,
    which we describe in \ref{nrem:h-cat}.
    Moreover, $X_1^{eq} \subset X_1$ is always a union of connected components.
    Therefore a Segal space is complete if and only if $s_0:X_0 \to X_1^{eq}$
    is an equivalence.
\end{remark}

\begin{thm}[\!{\cite{Ber07}}]\label{thm:CSS}
	The functor $N$ takes values in complete Segal spaces 
    and induces an equivalence
	\[
		N: \Cati \xrightarrow{ \simeq } \CSS.
	\]
\end{thm}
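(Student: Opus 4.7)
The plan is to exhibit $\Phi$ as the right adjoint of an adjunction $L \dashv \Phi$ and prove that this adjunction restricts to an equivalence between $\CSS$ and $\Cati$. The left adjoint $L: \sP(\gD) \to \Cati$ is defined as the essentially unique colimit-preserving extension of $N: \gD \to \Cati$ along the Yoneda embedding; its existence follows from the universal property of presheaf \icats.

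First I would verify that $\Phi(\cC) \in \CSS$ for every \icat\ $\cC$. For the Segal condition it suffices to show that the pushout $[n] \amalg_{[0]} [m]$ computed in $\Cati$ is $[n+m]$. This is a pushout of posets; under the inclusion $\cc{Cat}_1 \hookrightarrow \Cati$ it reduces to the corresponding $1$-categorical fact. Completeness holds because $\II \to *$ is an equivalence in $\Cati$, both objects being terminal groupoids, so applying $\hom_\Cati(-,\cC)$ turns it into an equivalence.

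Next I would show that $\Phi$ is fully faithful, which is equivalent to the counit $L\Phi(\cC) \to \cC$ being an equivalence. Concretely this means every \icat\ is canonically the colimit of its simplices:
\[
    \cC \;\simeq\; \colim_{([n] \to \cC) \in \gD_{/\cC}} [n].
\]
For $\cC$ a space this is the classical fact that every space is the realisation of its singular complex. The general case can be reduced to the case of spaces via the Segal decomposition of mapping spaces, or alternatively phrased as the density of $\gD$ in $\Cati$, which is built into standard presentations of $\Cati$.

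The main obstacle, and the only truly nontrivial step, is to show that the unit $X \to \Phi L(X)$ is an equivalence for every $X \in \CSS$. This is equivalent to the statement that $\Cati$ is the accessible localisation of $\sP(\gD)$ at the Segal maps $Y([n]) \amalg_{Y([0])} Y([m]) \to Y([n+m])$ together with the completeness map $N(*) \to N(\II)$. The inclusion of $\Cati$ into this localisation follows from the preceding paragraphs, but the reverse inclusion — the assertion that these are the \emph{only} relations needed — is the substantive content of Bergner's theorem \cite{Ber07}, which in turn rests on the Joyal–Tierney Quillen equivalence comparing Joyal's quasicategory model with Rezk's complete Segal space model.
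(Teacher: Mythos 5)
Your proof takes essentially the same route as the paper: both hand off the substantive content — that the unit $X \to \Phi L(X)$ is an equivalence on complete Segal spaces — to Bergner and Joyal--Tierney, and both treat the remaining parts (Segal condition, completeness, density of $\gD$) as routine. Your adjunction framework $L \dashv \Phi$ is a cleaner way of organising the same dependence, and your phrasing of the unit statement as "$\Cati$ is the accessible localisation of $\sP(\gD)$ at the Segal and completeness maps" correctly identifies what the citation is doing. So the overall shape of the argument matches.

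There is, however, a slip in your justification of the Segal condition. You reduce it to showing that the pushout $[n] \amalg_{[0]} [m]$ computed in $\Cati$ is $[n+m]$, and then claim this "reduces to the corresponding $1$-categorical fact" under the inclusion $\cc{Cat}_1 \hookrightarrow \Cati$. But as the paper itself notes, that inclusion is a \emph{right} adjoint (to $h_1$), so it preserves limits, not colimits; a pushout computed in $\cc{Cat}_1$ need not remain a pushout after passing to $\Cati$. The conclusion is nonetheless correct: in the quasicategory model it amounts to the statement that the spine inclusion $\Delta^n \cup_{\Delta^0} \Delta^m \hookrightarrow \Delta^{n+m}$ is inner anodyne and hence a Joyal equivalence, but this is a genuine model-specific input, not a formal transport of the $1$-categorical pushout. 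Since this fact is also implicit in the sources you cite (it is part of what makes $\gD$ dense and $\Phi(\cC)$ a Segal space), the proof survives, but the stated reduction would not hold up as written.
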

\begin{proof}[Proof by citation]
    It is not difficult to see that for an \icat\ $\cC$ the nerve 
    $N\cC$ indeed satisfies the Segal and the completeness condition.
    To prove that $N$ induces an equivalence is more difficult.

    The first result of this type was \cite{Ber07},
    but there the model used for $\Cati$ was simplicial categories.
    Joyal and Tierney show in \cite{JT07} that the model categories 
    of quasicategories and complete Segal spaces are Quillen-equivalent.
    The $\infty$-categorical statement is an immediate consequence 
    of their result,
    see \cite[Corollary 4.3.16]{LurGW}.
\end{proof}

\begin{nrem}\label{nrem:recover}
	It is important to understand how standard constructions
    in $\Cati$ change under the equivalence to $\CSS$.
    Let $X = N(\cC)$ be the complete Segal space of some 
    \icat\ $\cC$.
	By definition, the levels of $X$ are of the form
	\[
		X_n = \hom_{\Cati}([n], \cC) = (\Fun([n],\cC))^\sim.
	\]
	In particular $X_0$ is the maximal subgroupoid $\cC^\sim$ of $\cC$ 
    and $X_1$ is the maximal subgroupoid of 
    the arrow category $\cC^{[1]}$.
	
	We will say that an object of $\cC$ is a functor $* \to \cC$,
    where $*$ is the terminal category.
	For two such objects $a,b:* \to \cC$ one can reconstruct 
    the hom-space $\hom_\cC(a,b)$ from the complete Segal space $X$ as:
	\[
		\hom_\cC(a,b) 
            \simeq \{a\} \times_{\cC^\sim} (\Fun([1], \cC))^\sim  
                \times_{\cC^\sim} \{b\} 
		= \{a\} \times_{X_0} X_1  \times_{X_0} \{b\} . 
	\]
	We can recover the composition, 
    up to inverting the weak equivalence $X_2 \to X_1 \times_{X_0} X_1$:
	\begin{align*}
		\hom_\cC(a,b)\times \hom_\cC(b,c) 
            &= (\{a\} \times_{X_0} X_1 \times_{X_0} \{b\}) 
                \times (\{b\} \times_{X_0} X_1 \times_{X_0} \{c\}) \\
		& \to \{a\} \times_{X_0} X_1 \times_{X_0} X_1 \times_{X_0} \{c\}
		\xleftarrow{\sim}  \{a\} \times_{X_0} X_2 \times_{X_0} \{c\}\\
		& \xrightarrow{d_1}  \{a\} \times_{X_0} X_1 \times_{X_0} \{c\}
            = \hom_\cC(a,c).
	\end{align*}
\end{nrem}
\begin{nrem}\label{nrem:h-cat}
	The above suffices to reconstruct the homotopy category
    $h_1 \cC$ of $\cC$ up to categorical equivalence:
    we define the set of objects as $O:= \pi_0 X_0$ and pick a section
    $o:\pi_0 X_0 \to X_0$ to interpret them in the above sense.
	For two $a,b \in O$ we define the morphism set as
	\[
        \Hom_\mcC(a,b) 
        := \pi_0 \left(\{o(a)\} \times_{X_0} X_1 \times_{X_0} 
             \{o(b)\}\right).
	\]
	The composition in $\mcC$ is constructed by taking $\pi_0$ 
    of what we did earlier. 
	Note that this erases the ambiguity coming from 
    inverting the equivalence.
\end{nrem}

\begin{remark}
    The construction given above has the disadvantage 
    that it is not functorial in $X$: 
    we need to make the unnatural choice of a section 
    $o:\pi_0 X_0 \to X_0$.
    In fact, we should not expect there to be a $1$-categorical 
    description since the $\infty$-functor 
    $h_1:\Cati \to \cc{Cat}_1$ does not factor through the $1$-category
    of $1$-categories $\Cat_1$.
\end{remark}

\subsection{Dualisability}
We recall the necessary definitions to talk about dualisable objects:
\begin{defn}[\!{\cite{Seg74}}]
	Segal's category $\gC$ is defined as a skeleton of the opposite category 
    of the category of finite pointed sets.
	The objects are $\gle{k} := \{*,1,\dots,k\}$ for $k \ge 0$
	and morphisms in $\gC^{op}$ are basepoint preserving maps.
	A functor $X:\gC^{op} \to \cC$ is called a 
    \emph{special $\gC$-object} in $\cC$ if
    $X(\gle{0})$ is a terminal object of $\cC$ and 
    for all finite pointed sets $A,B$ 
    the canonical maps $A \vee B \to A$ and $A \vee B \to B$ 
    induce an equivalence
	\[
		X(A\vee B) \xrightarrow{\simeq} X(A) \times X(B).
	\]
	Let $\CSS^\ot$ denote the \icat\ of special $\gC$-objects in $\CSS$.
	This can be thought of as a full subcategory of
    $\sP(\gD \times \gC)$.
	We write $X_n^{\gle{k}}$ for the value of 
    $X:\gD^{op} \times \gC^{op} \to \Spc$ on $([n],\gle{k})$.
\end{defn}

\begin{thm}\label{thm:smCSS}
	The \icat\ of special $\gC$-objects in $\cC$ is a model
    for the commutative monoid objects in $\cC$.
	In particular, the nerve functor $N: \Cati \simeq \CSS$ lifts to an 
    equivalence $\Catit \simeq \CSS^\ot$ between the \icat\ of 
    symmetric monoidal \icats\ and the \icat\ of special 
    $\gC$-objects in $\CSS$.
\end{thm}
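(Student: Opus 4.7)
The plan is to decouple the theorem into two independent pieces and glue them with Theorem \ref{thm:CSS}. The main abstract input is an $\infty$-categorical form of Segal's theorem: for any \icat\ $\cE$ admitting finite products, the full subcategory of $\Fun(\gC^{op}, \cE)$ spanned by the special $\gC$-objects is equivalent to $\mathrm{CMon}(\cE)$, the \icat\ of commutative monoid objects in $\cE$. A clean proof in exactly this generality is Proposition 2.4.2.5 of Lurie's Higher Algebra. The underlying idea is that the Segal condition rigidifies a functor $X:\gC^{op}\to\cE$ so that $X(\gle{1})$ inherits a canonical commutative multiplication from the zig-zag
\[
    X(\gle{1})\times X(\gle{1}) \xleftarrow{\sim} X(\gle{2}) \to X(\gle{1}),
\]
with all higher coherences encoded by the other $X(\gle{k})$ and the morphisms in $\gC$.

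Given this input the argument proceeds in two steps. First, apply Segal's theorem to $\cE=\Cati$. Taking the result as the working definition of a symmetric monoidal \icat\ (equivalent to Lurie's operadic definition via cocartesian fibrations $\cC^\ot\to\gC$, by a straightening argument that recovers the fibres $\cC^\ot_{\gle{k}}\simeq \cC^k$ as the values $X(\gle{k})$) gives $\Catit\simeq \Fun^{\times}(\gC^{op},\Cati)$. Second, transport across the equivalence $\Cati\simeq\CSS$ of Theorem \ref{thm:CSS}: since this is an equivalence of \icats\ it preserves finite products, so postcomposition induces an equivalence
\[
    \Fun^{\times}(\gC^{op},\Cati)\xrightarrow{\simeq} \Fun^{\times}(\gC^{op},\CSS) = \CSS^{\ot}.
\]
Composing the two yields $\Catit\simeq\CSS^{\ot}$. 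That this equivalence is implemented levelwise by the nerve from Theorem \ref{thm:CSS} is immediate, because both equivalences arise from postcomposing a $\gC$-diagram of \icats\ with the nerve.

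The principal obstacle is foundational rather than technical: everything rests on identifying the chosen definition of `symmetric monoidal \icat' with a commutative monoid object in $\Cati$. If one adopts Lurie's operadic definition via cocartesian fibrations over $\gC$ this identification is the content of the straightening equivalence; if one adopts commutative monoids in $\Cati$ as the working definition, there is nothing further to do. Once this point is fixed the remainder of the proof is a formal consequence of Segal's theorem, applied once to $\Cati$ and once to $\CSS$, together with the product-preserving equivalence of Theorem \ref{thm:CSS}.
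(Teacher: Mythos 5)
Your proposal is correct and rests on exactly the same key input as the paper's proof, namely the citation to \cite[Proposition 2.4.2.5]{LurHA} identifying commutative monoid objects with special $\gC$-objects. The paper's proof is a terse ``by citation'' argument addressing only the first sentence of the theorem; you supply the extra (straightforward) step the paper leaves implicit, namely transporting across the product-preserving equivalence $\Cati \simeq \CSS$ of Theorem \ref{thm:CSS} to obtain $\Catit \simeq \CSS^\ot$, and you flag the foundational point about which definition of symmetric monoidal \icat\ one takes as primitive.
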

\begin{proof}[Proof by citation]
    Commutative monoids in $\cC$ are by definition 
    $E_\infty$-algebras in $\cC$ with respect to the symmetric 
    monoidal structure coming from the Cartesian product.
    That these are the same as functors $\gC^{op} \to \cC$ 
    satisfying the `specialness condition' is for instance shown in 
    \cite[Proposition 2.4.2.5]{LurHA}.
\end{proof}

\begin{cor}\label{cor:h1N}
    The localisation-adjunction $h_1 \dashv I$ between $\Cati$
    and $\cc{Cat}_1$ lifts to a localisation-adjunction
    \[
        h_1^\ot:  \Catit\rightleftarrows \cc{Cat}_1^\ot : I^\ot.
    \]
\end{cor}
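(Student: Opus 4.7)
The plan is to apply the general principle that a reflective localisation $L \dashv R$ of $\infty$-categories lifts to an adjunction on commutative monoid objects provided both $L$ and $R$ preserve finite products. Under the equivalence $\Catit \simeq \CSS^\ot$ (and its $1$-categorical analogue, cutting down to the essential image of $N$), the symmetric monoidal \icats\ are modelled as special $\gC$-objects and the desired adjunction $h_1^\ot \dashv N^\ot$ should be obtained simply by applying $h_1$ and $N$ levelwise.

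First I would observe that $\cc{Cat}_1 \subset \Cati$ is closed under finite products: the product of two $1$-categories is again a $1$-category. Since $N$ is right adjoint, it automatically preserves products, so applying $N$ levelwise to a special $\gC$-object in $\CSS$ corresponding to a symmetric monoidal $1$-category yields a special $\gC$-object in $\CSS$, defining $N^\ot$. The next, and main, step is to check that $h_1$ also preserves finite products. This does not follow formally from $h_1$ being a left adjoint, but it can be verified directly from the explicit description in remark \ref{nrem:recover}: objects of $h_1\cC$ are $\pi_0(\cC^\sim)$ and morphism sets are $\pi_0$ of hom-spaces, and both $\pi_0$ and the formation of $\cC^\sim$ commute with finite products. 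Given this, applying $h_1$ levelwise preserves the Segal and specialness conditions, defining $h_1^\ot$.

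The adjunction $h_1^\ot \dashv N^\ot$ then follows from the pointwise adjunction in $\sP(\gD \times \gC)$ restricted to the subcategories of special $\gC$-objects in $\CSS$. Finally, to upgrade this to a localisation-adjunction, I would show that the counit $h_1^\ot N^\ot \Rightarrow \id$ is an equivalence: this is checked levelwise, where it reduces to the counit of the localisation $h_1 \dashv N$, which is an equivalence because $N$ is fully faithful (by construction of $\cc{Cat}_1$ as the essential image of $N$).

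The principal obstacle is the product-preservation property of $h_1$ in step one; everything else is formal manipulation of $\gC$-objects and the pointwise adjunction. If one wished to avoid the explicit $\pi_0$-computation, an alternative is to invoke the general fact that a Bousfield localisation whose local objects are closed under finite products is compatible with the Cartesian symmetric monoidal structure, and hence induces a symmetric monoidal adjunction, which in turn descends to commutative monoid objects.
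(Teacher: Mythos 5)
Your proof is correct and follows essentially the same route as the paper: pass to the pointwise adjunction $(h_1)_* \dashv N_*$ on $\Fun(\gC^{op},-)$, restrict to the full subcategories of special $\gC$-objects, and deduce that the restriction is a localisation from full faithfulness of $N$. The paper simply asserts that ``both functors preserve the specialness of $\gC$-objects''; you correctly identify that the substantive part of this assertion is that $h_1$ preserves finite products, and you supply the verification via $\pi_0$ and $(-)^\sim$ commuting with products, which the paper leaves implicit. One small caveat: your phrase ``applying $h_1$ levelwise preserves the Segal and specialness conditions'' slightly muddles the two gradings. When $\Catit$ is presented as the full subcategory of $\Fun(\gC^{op}, \Cati)$ on special objects, the postcomposition functor $(h_1)_*$ is applied in the $\gC$-direction only, and the only condition to check is specialness; the Segal and completeness conditions live in the $\gD$-direction inside $\Cati \simeq \CSS$ and are not touched by this operation. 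This does not affect the correctness of your argument.
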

\begin{proof}
    There is an adjunction on the functor categories
    \[
        (h_1)_*: \Fun(\gC^{op}, \Cati) 
        \rightleftarrows \Fun(\gC^{op}, \cc{Cat}_1) :I_*
    \]
    and since both functors preserve the `specialness' of 
    $\gC$-objects this adjunction restricts to the full subcategories 
    $\Catit$ and $\cc{Cat}_1^\ot$.
    The functor $I_*$ is fully faithful because $I$ was and hence 
    $h_1^\ot$ is a localisation.
\end{proof}

\begin{nrem}
    Note that here $\cc{Cat}_1^\ot$ is by definition the \icat\ of 
    special $\gC$-objects in $\cc{Cat}_1$.
    It is a folklore theorem that this is equivalent to the 
    $(2,1)$-category of symmetric monoidal categories.
	For the readers convenience we will use this theorem and from now on 
    think of $h_1 X$ as a symmetric monoidal category.
    However, it is worth remarking that we could equally well work 
    with (special) $\gC$-categories.
\end{nrem}

\begin{defn}\label{defn:dual}
    An object $x$ in a symmetric monoidal \icat\ $\cC$ is called 
    \emph{dualisable} if $x$ is dualisable as an object of the symmetric 
    monoidal $1$-category $h_1\cC$. The $1$-categorical definition
    was given in the introduction.
	If all objects of $\cC$ have duals we say that $\cC$ has duals.
    Define $\cC^\fd$ to be the maximal (full) subcategory of $\cC$ 
    that has duals.
\end{defn}

\subsection{Some functors of interest} 

\begin{defn}\label{defn:functors}
	For a complete Segal space $X \in \CSS$ we define 
	its \emph{space of objects} as $\obj(X) := X_0$.
	Its \emph{space of endomorphisms} is defined as 
	$\mcE(X) := X_0 \times_{X_0\times X_0} X_1$, 
	where the two maps are the diagonal $\gD:X_0 \to X_0 \times X_0$
	and the source-target map $(d_0,d_1):X_1 \to X_0 \times X_0$.
    The \emph{space of automorphisms} of $X$ is defined as 
    $\mcA(X) := X_0 \times_{X_0\times X_0} X_1^{eq}$.
    Here $X_1^{eq} \subset X_1$ is the subspace of those connected components 
    that represent invertible morphisms in the homotopy category.
\end{defn}

\begin{defn}\label{defn:functors-symmon}
    For a special $\gC$-object in complete Segal spaces $X \in \CSS^\ot$ 
    we let $\obj^\fd(X) \subset \obj(X) = X_0^{\gle{1}}$ denote the union of those
    connected components that correspond to dualisable objects in the homotopy category.
    Accordingly, we let $\mcE^\fd(X) \subset \mcE(X)$ and $\mcA^\fd(X) \subset \mcA(X)$
    be the subspaces supported at the dualisable objects.
	Finally, we set
	\[
		\Sc(X) := X_0^{\gle{0}} 
        \times_{(X_0^{\gle{1}} \times X_0^{\gle{1}})}  X_1^{\gle{1}}.
	\]
	Note that this agrees with the functor $\End_{\blank}(\1)$ defined above 
	\cite[Proposition 2.7]{TV15}.
\end{defn}

\begin{remark}
    By construction there is a natural transformation $\mcA \to \mcE$ 
    such that for each $X$ the map $\mcA(X) \to \mcE(X)$ is an equivalence
    onto the connected components it hits.
\end{remark}

\begin{lem}\label{lem:A=L}
    For any complete Segal space represented by a simplicial topological space $X_\cd$ 
    we can compute $\mcE(X)$ as the space of tuples $(\gc, f)$ where 
    $f \in X_1$ and $\gc$ is a path from $d_0f$ to $d_1f$.
    The space $\mcA(X)$ is the subspace of those $(\gc,f)$ where $f \in X_1^{eq}$,
    i.e.\ $f$ is invertible in the homotopy category. 
    Moreover, $\mcA(X)$ is equivalent to the free loop space $\gL(\obj(X)) = \Map(S^1, X_0)$.
\end{lem}
\begin{proof}
    In order to compute the homotopy pullback 
    $\mcA(X) = X_0 \times_{X_0\times X_0}^h X_1^{eq} $
    we replace the diagonal $X_0 \to X_0 \times X_0$ by the path fibration
    $P(X_0) \to X_0 \times X_0$ and obtain:
    \[
        \mcE(X) \simeq P(X_0) \times_{X_0\times X_0} X_1.
    \]
    Note that the right-hand space is indeed the space of tuples $(\gc,f)$
    as described in the lemma. 
    For the second claim, recall from remark \ref{rem:Rezk-complete} 
    that for a complete Segal space the degeneracy
    map $s_0:X_0 \to X_1$ induces an equivalence $X_0 \simeq X_1^{eq}$.
    We therefore have an further equivalence 
    \[
        \mcA(X) \simeq P(X_0) \times_{X_0\times X_0} X_1^{eq}
        \simeq P(X_0) \times_{X_0\times X_0} X_0.
    \]
    The latter space is the space of paths in $X_0$ whose
    start and end-point agree, i.e.\ the space of free loops $\gL(X_0) = \Map(S^1, X_0)$.
\end{proof}

\begin{lem}\label{lem:functors}
	Write $X \in \CSS^\ot$ as $X\simeq N\cC$ for some symmetric monoidal 
    \icat\ $\cC$.
	Then the above functors can be described as follows:
	\begin{itemize}
		\item $\obj(X)$ is the maximal subgroupoid $\cC^\sim$ of $\cC$ 
            interpreted as a space,
		\item $\obj^{\fd}(X)$ is the $\infty$-groupoid $(\cC^\fd)^\sim$ 
            interpreted as a space,
		\item $\Sc(X)$ is the endomorphism space $\hom_\cC(\1,\1)$ of the 
            unit object $\1 \in \cC$.
        \item $\mcA^\fd(X)$ is equivalent to $\Map(S^1, \obj^\fd(X))$,
        which is the functor $\mcL\mc{I}$ in \cite[498]{TV15}.
	\end{itemize}
\end{lem}
\begin{proof}
	For $\obj(X)$ this is a direct consequence of the definition on 
    $N\cC$, as discussed in \ref{nrem:recover}.
	Since the notion of dualisability is defined via 
    the homotopy-category the description of $\obj^\fd$ follows immediately.
	
	The definition of $\Sc(X)$ is somewhat cryptic, but in fact not much 
    is happening:
	the space $X_0^{\gle{0}}$ is contractible since the $\gC$-object is 
    special.
	We may hence replace it by the terminal space $*$.
	The functor $* \to X_0^{\gle{0}} \to X_0^{\gle{1}} = \cC^\sim$ picks 
    out the unit object $\1$ of $\cC$ and so the definition is 
    equivalent to 
	\[
		\Sc(X) \simeq \{\1\} \times_{X_0^{\gle{1}} }  X_1^{\gle{1}} 
        \times_{X_0^{\gle{1}} } \{\1\} 
				= \{\1\} \times_{\cC^\sim}  
                (\cC^{[1]})^\sim \times_{\cC^\sim} \{\1\}  \simeq 
                \hom_\cC(\1,\1)
	\]
    as claimed.
    
    Finally, the claim that $\mcA^\fd(X) \simeq \Map(S^1, \obj^\fd(X))$ 
    follows by the same argument as in lemma \ref{lem:A=L},
    now restricted to the full subcategory on dualisable objects.
\end{proof}

We also show that these definitions are indeed compatible with the one-categorical
definitions given in the introduction:
\begin{lem}\label{lem:recov}
    For $\cC\in \CSS^\ot$ there are natural bijections
    $\pi_0 \Sc(\cC) \cong \Sc(h_1 \cC)$,
    $\pi_0 \mcE^\fd(\cC) \cong E^\fd(h_1\cC)$, and 
    $\pi_0 \mcA^\fd(\cC) \cong A^\fd(h_1\cC)$.
\end{lem} 
\begin{proof}
    In lemma \ref{lem:functors} we provided a natural equivalence $\Sc(\cC) \simeq \hom_\cC(\1,\1)$. After applying $\pi_0$ this becomes
    \[
        \pi_0 \Sc(\cC) \cong \pi_0 \hom_\cC(\1,\1) = \Hom_{h_1\cC}(\1,\1) = \Sc(h_1 \cC).
    \]
    For the second part let us assume that every object in $\cC$ is dualisable
    so that $\mcE^\fd(\cC) = \mcE(\cC)$. Since duals are computed on the level
    of homotopy categories this will not cause a problem.
    
    The homotopy fibre of the map $\mcE(\cC) \to N_0\cC$ at an object $x \in \cC$
    is equivalent to $\hom_\cC(x,x)$, so the connected components of the fibre are 
    $\pi_0\hom_\cC(x,x) = \End_{h_1\cC}(x)$. The fundamental group of $N_0\cC$ at $x$
    is $\Aut_{h_1\cC}(x)$. Therefore, choosing representatives $x_i$ for the 
    isomorphism classes in $\cC$, we can write $\pi_0\mcE(\cC)$ as the disjoint
    union of quotients: $\coprod_{i \in \pi_0 N_0\cC} \End_{h_1\cC}(x)/\Aut_{h_1\cC}(x)$.
    Here the action is by conjugation and the resulting set 
    is canonically isomorphic to $E(h_1\cC)$.
    This isomorphism restricts to a bijection between $\mcA(\cC)$ and $A(h_1\cC)$.
\end{proof}

%--------------------------------------------------------------------
%--------------------------------------------------------------------
%--------------------------------------------------------------------

\section{The bordism category and the cobordism hypothesis}
\label{sec:Bord}

In order to apply the cobordism hypothesis, which is a key step in the proof of the main theorem,
we need to first establish a concrete model for the one dimensional bordism category.
In this section we take the model for $\Born{1}(X)$ from \cite{SP17}
and show that it is a special $\gC$-object in complete Segal spaces.
Moreover, we define a ``marked" version of the bordism category 
and show that it too is a special $\gC$-object in complete Segal spaces.
Using this model we can then formulate a special case of the conjectural 
cobordism hypothesis with singularities.

\subsection{The bordism category as a complete Segal space}

    We are going to define a symmetric monoidal \icat\ $\Born{d}(X)$
    by constructing a $1$-functor $F:\gD^{op} \times \gC^{op} \to \Top$ based on \cite{SP17}
    and then showing in \ref{thm:Bord-Segal} that after composing with 
    $\Top \to \Spc$ it satisfies the required conditions 
    to be in $\CSS^\ot$ as in theorem \ref{thm:smCSS}.
    
    In what follows we let $\IR^\infty$ denote the countably dimensional vector space
    $\IR^\infty = \colim_{n \to \infty} \IR^n$ and
    we let $\IR^{1+\infty}$ denote the product $\IR \times \IR^\infty$.

\begin{defn}[\!{\cite[Definition 5.7]{SP17}}]
	For a submanifold $W \subset \IR^{1+\infty}$ and an interval $U \subset \IR$ 
	we write $W_U := W \cap (U \times \IR^\infty)$.
	$W$ is called \emph{cylindrical} over $U$ if there is 
	a closed $(d-1)$-dimensional submanifold $N \subset \IR^\infty$ 
	such that $W_U = U \times N$.
	
	For $[n] \in \gD$ we let $\IR^{[n]}$ be the topological space of monotone maps $[n] \to \IR$. 
	We say that a submanifold $W \subset \IR^{1+\infty}$ is admissible 
	with respect to $t \in \IR^{[n]}$ if $W$ is closed as a subset,
	the projection $\pi:W \to \IR$ to the first coordinate is proper,
	and there is an $\eps>0$ such that $W$ is cylindrical over each of the intervals 
	$(t_i-\eps, t_i+ \eps)$ for $i=0,\dots, n$.
\end{defn}

To topologise the space of bordisms we recall the plot-topologies from \cite{SP17}.
\begin{defn}\label{defn:space-of-submfds}
    For a space $X$ we let $\Psi_d(X)$ denote the set of all tuples $(W, \gp)$ 
    where $W \subset \IR^\infty$ is an oriented $d$-dimensional submanifold 
    that is closed as a subset and $\gp:W \to X$ is a continuous map.
    
    For any $k$-dimensional manifold $U$ we say that a map $f: U \to \Psi_d(X)$
    is \emph{smooth} if the graph
    \[
        \gC(f) = \{ (u,v) \in U \times \IR^\infty \;|\; v \in f(u)\}
    \]
    is a smooth submanifold of $U \times \IR^\infty$ and the map $\gp_f:\gC(f) \to X$
    is continuous.
    The \emph{plot topology} on $\Psi_d(X)$ is the finest topology such that all
    smooth maps $U \to \Psi_d(X)$ are continuous.
\end{defn}

We are now ready to define the simplicial space that will give rise to the bordism category.
Our model is almost identical to the one defined in \cite[Definition 5.8]{SP17}
with the only difference being that we chose to encode the symmetric monoidal
structure by using $\gC$-spaces whereas Schommer-Pries uses $E_\infty$-algebras.

\begin{defn}\label{defn:PBord}
    For any topological space $X$, we define $\Born{d}(X)$ by the functor 
    $\gD^{op} \times \gC^{op} \to \Top$ 
    that sends $([n], \gle{k})$ to the topological space of tuples 
    $
        (t, (W_1,\gp_1), \dots, (W_k,\gp_k))
    $
    where $t\in \IR^{[n]}$ and the $W_i$ are pairwise disjoint 
    $d$-dimensional oriented submanifolds of $\IR\times\cube$ 
    admissible with respect to $t$.
    The $\gp_i$ are continuous maps $\gp_i: W_i \to X$.
    This is topologised %
    as a subspace of $\IR^{[n]} \times (\Psi_d(X))^k$. 
    Functoriality in the $\gC$-direction is defined as follows.
    For $\gl: \gle{k} \to \gle{l}$ we set 
    \[
        \gl_* (t, (W_1,\gp_1), \dots, (W_k,\gp_k)) 
        = \left( t, \coprod_{\gl(i_1) =1} ( W_{i_1}, \gp_{i_1}), \dots, \coprod_{\gl(i_l) =l} (W_{i_l},  \gp_{i_l})\right).
    \]
    In the $\gD$-direction a morphism $\rho: [n] \to [m]$ 
    acts by reindexing the $t_i$: 
    $(\rho^*t)_j = t_{\rho(j)}$.
\end{defn}

We now proceed to show that the above satisfies the Segal and completeness condition,
so that it lies in $\CSS^\ot \subset \Fun(\gD^{op}\times \gC^{op}, \Spc)$
and gives rise to a symmetric monoidal infinity category via theorem \ref{thm:smCSS}.
\begin{thm}\label{thm:Bord-Segal}
    For any $d \ge 0$ and $X \in \Top$ the simplicial space $\Born{d}(X)^{\gle{1}}_\cd$
    is a Segal space and the functor $\Born{d}(X):\gD^{op} \times \gC^{op} \to \Spc$
    is a special $\gC$-object in Segal spaces.
    If $d \le 2$, then $\Born{d}(X)$ is also complete and 
    defines a special $\gC$-object in complete Segal spaces.
\end{thm}
\begin{proof}
    We begin with the Segal condition for $\Born{d}(X)^{\gle{1}}$. 
    Let $B_n = \{(t\in \IR^{[n]}, (W,\gp)) \;|\; \dots \}$
    be the simplicial topological space
    from definition \ref{defn:PBord} that represents $\Born{d}(X)^{\gle{1}}$.
    
    For any $n$ we let $B_n' \subset B_n$ be the subspace of those $(t, (W,\gp))$
    satisfying that $t_i = i$, that $W$ is cylindrical over $(-\infty,0]$ and $[n,\infty)$,
    and that the two restrictions $\gp: W_{(-\infty,0]} \to X$ and
    $\gp: W_{[n,\infty)} \to X$ factor through the projection to $W_{\{0\}}$ 
    and $W_{\{n\}}$ respectively.
    These conditions ensure that $(t, (W,\gp))$ is uniquely determined by the restriction
    $(W_{[0,n]}, \gp_{|W_{[0,n]}})$.
    This encodes exactly the data of $n$ composable bordisms of length $1$.
    Put into formulas we have a homeomorphism:
    \[
        B_n' \xrightarrow{\quad \cong \quad} B_1' \times_{B_0'} \dots \times_{B_0'} B_1'
    \]
    defined by sending $(W,\gp)$ to the tuple 
    $(W_{[i,i+1]}, \gp_{|W_{[i,i+1]}})_{i=0}^{n-1}$.
    To be precise we should actually translate $W_{[i,i+1]}$ by $i$ to the left
    and then extend it cylindrically over $(-\infty,0]$ and $[1,\infty)$,
    then it is a well-defined point in $B_1' \subset B_1$.
    
    The above homeomorphism already indicates that $B_n'$ satisfies some type of Segal condition.
    However, one should note that $B_n' \subset B_n$ is not closed under face 
    or degeneracy operators, so the $B_n'$ do not actually define a simplicial space.
    Still, this will be very useful as we have the following homotopy commutative diagram:
    \[
    \begin{tikzcd}
        B_n' \ar[d, "\cong"] \ar[rr, "i_n"] 
        && B_n \ar[d, "S"] 
        \\
        B_1' \times_{B_0'} \dots \times_{B_0'} B_1' \ar[r, "q"] 
        & B_1' \times_{B_0'}^h \dots \times_{B_0'}^h B_1' \ar[r, "j"] 
        & B_1 \times_{B_0}^h \dots \times_{B_0}^h B_1
    \end{tikzcd}
    \]
    Our goal is to show that the map $S$ is an equivalence -- this is the Segal condition.
    The map $i_n$ is the canonical inclusion map $i_n: B_n' \to B_n$.
    It has a homotopy inverse $B_n \to B_n'$ defined by piece-wise linearly 
    rescaling the first coordinate direction of $\IR \times \cube$
    to that $t_i = i$ and then pushing off everything outside of $[0,n]$ to infinity.
    This is a standard type of argument and we refer the reader 
    to \cite[Proof of 3.9]{GRW10} for details.
    This also implies that $j$ is an equivalence as it is a homotopy pullback of $i_1$ 
    and $i_0$.
    
    It remains to check that the map $q$ that compares the strict pullback
    with the homotopy pullback is an equivalence. 
    For this it will suffice to show that the maps $B_1' \rightrightarrows B_0'$
    involved in the pullback are Serre fibrations.
    But now observe that $B_1'$ is in fact homeomorphic to the space 
    $\coprod_{[W]} \mathcal{M}^X(W)$ discussed in the start of \cite[section 3.1]{ERW19}.
    This is true because the bordism category from \cite{ERW19} agrees
    with the one from \cite{GRW10} after removing units 
    and \cite[Theorem A.2]{SP17} shows that their topology in turn is homeomorphic
    to the plot topology we used.
    We can therefore cite \cite[Proposition 3.2.4(ii)]{ERW19},
    which tells us that the source and target maps 
    $s,t:B_1' \to B_0'$ are Serre fibrations.
    This shows that $q$ is an equivalence and hence the map $S$ also has to be,
    and $B_\cd$ is a Segal space for any $d$ and $X$ as claimed.

    The next claim is that $B_\cd$ is complete for $d \le 2$.
    As in remark \ref{rem:Rezk-complete} we let $B_1^{\rm eq} \subset B_1$ denote the subspace
    of those $1$-simplices that represent invertible morphisms in the homotopy category
    $h(B_\cd)$. This is always a union of connected components.
    By \cite[Theorem 6.2]{Rez01} $B_\cd$ is complete if and only if
    $d_0: B_1^{\rm eq} \to B_1$ is an equivalence.
    To prove this, we use that $h(B_\cd)$ is equivalent to $\Cob_d(X)$ 
    by lemma \ref{lem:hBord=Cob}. Since $d \le 2$ it is not difficult to see that
    if a bordism $W:M \to N$ is invertible in $\Cob_d(X)$, 
    then it is diffeomorphic to $M \times [0,1]$.
    (See remark \ref{rem:completeness} for why this does not work for general $d$.)
    As before we use \cite[section 3.1]{ERW19} to see that 
    $B_1^{eq} \subset B_1$ is equivalent to 
    $\coprod_{[M]} \Map(M\times [0,1], X) /\!\!/ \Diff^+(M \times [0,1])$ and
    the space of objects $B_0$ is equivalent to $\coprod_{[M]} \Map(M,X)/\!\!/\Diff^+(M)$.
    In both cases the coproduct runs over representatives for diffeomorphism
    classes of closed oriented $(d-1)$-manifolds.
    The map $B_0 \to B_1^{eq}$ is an equivalence for $d \le 2$ 
    because $\Diff^+(M\times [0,1]) \simeq \Diff^+(M)$
    holds for all $(\le 1)$-manifolds $M$, but it is not true for general $d$
    as the pseudo-isotopy space is generally non-trivial.
    
    Finally, it remains to show that $\Born{d}(X)$ satisfies the specialness 
    condition as a $\gC$-object in simplicial spaces.
    This can be checked in each level separately, and relies on the idea
    that $\cube$ is ``large enough" for us to make any two submanifolds
    disjoint, canonically up to a contractible space of choices.
    We refer the reader to \cite[Proposition 7.2]{CS19} for details.
\end{proof}

\begin{remark}
    The first model for a bordism category satisfying the Segal condition was constructed in \cite{Sch14}.
    There the completeness condition is enforced by replacing $\Born{d}$ with its completion.
    This is not necessary for $d \le 2$ as the more naive construction is already complete 
    -- see \cite[Remark 5.25]{CS19}.
    Since we have not been able to find a proof for this claim in the literature, we give the one above.
\end{remark}

\begin{remark}\label{rem:completeness}
    Note that for $d>2$ the simplicial space $\Born{d}(X)^{\gle{1}}$ is still a Segal space,
    but the completeness condition is not automatic.
    In fact, we used in the proof that every invertible bordism $W$ is of the form $M \times [0,1]$.
    For general dimension $d$ two objects $M, N \in \Cob_d$
    are isomorphic if and only if $M \times \IR$ and $N \times \IR$
    are diffeomorphic, see \cite[Proposition 3.3]{HJ18}.
    When $d \neq 4$ an isomorphism $M \to N$ in $\Cob_d$ is exactly an h-cobordism
    \cite[Proposition 3.11]{HJ18}. 
    Therefore when $d\ge 5$ the existence of non-trivial $h$-cobordism 
    implies that $B_\cd$ is not complete.
\end{remark}

We now show that $\Born{d}(X)$ indeed recovers the desired $1$-category $\Cob_d(X)$
as its homotopy category. The lemma is based on \cite[proposition 8.20]{CS19},
which shows the same in a different model for $\Born{d}(X)$.
\begin{defn}\label{defn:Cob}
    The $1$-category $\Cob_d(X)$ has as objects tuples $(M,\gp)$ 
    where $M$ is a closed oriented $(d-1)$-manifold 
    and $\gp:M \to X$ is a continuous map.
    A morphism $(M,\gp) \to (N,\psi)$ is represented by a triple $(W, i, \chi)$
    where $W$ is a compact oriented $d$-manifold, $i:M^- \amalg N \cong \partial W$
    is an orientation-preserving diffeomorphism, and $\chi:W \to X$
    is a continuous map such that $\chi \circ i = \gp \amalg \psi$.
    Two such triples $(W,i,\chi)$ and $(W',i',\chi')$ represent the same morphism
    if there is an orientation preserving diffeomorphism $f: W \cong W'$
    such that $f_{|\partial W} \circ i = i'$ and that $\chi' \circ f$ is homotopic
    to $\chi$ relative to $\partial W$.
    Composition is defined by gluing cobordisms.
    This category has a symmetric monoidal structure induced by disjoint union.
\end{defn}
\begin{lem}\label{lem:hBord=Cob}
    For all $X$ the homotopy category $h_1 \Born{d}(X)$ is canonically equivalent
    to the symmetric monoidal category $\Cob_d(X)$ described in definition \ref{defn:Cob}.
\end{lem}
\begin{proof}
    As discussed in \ref{nrem:h-cat} we are free to choose representatives
    for $\pi_0 \Born{d}(X)_0$. In fact choosing multiple representatives
    per connected component yields an equivalent category, so we can 
    simply set the objects of $h_1 \Born{d}(X)$ to be tuples 
    $(\IR \times M, \gp \circ \pr_M)$
    where $M \subset \cube$ is a closed oriented $(d-1)$-manifold
    and $\gp: M \to X$ is a continuous map.
    This maps to the objects of $\Cob_d(X)$ by forgetting the embedding.
    
    The set of morphisms $(M,\gp) \to (N,\psi)$ can be computed
    as $\pi_0$ of the homotopy pullback 
    \[
        \{(M,\gp)\} \times_{\Born{d}(X)_0}^h \Born{d}(X)_1 \times_{\Born{d}(X)_0}^h \{(N,\psi)\}.
    \]
    Up to equivalence we may replace the middle space with the space 
    $B_1'$ of bordism of length $1$ from the proof of theorem \ref{thm:Bord-Segal}.
    So we are interested in the homotopy fibre of the map
    $B_1' \to (B_0')^2$ sending $W$ to $(W_0, W_1)$.
    By \cite[Proposition 3.2.4(ii)]{ERW19} this map is a fibration,
    so we can consider the strict fibre instead.
    From the description in \cite[section 3.1]{ERW19} it follows 
    that this is given by:
    \[
        \Hom_{h_0\Born{d}(X)}((M,\gp), (N,\psi))
        = \coprod_{[W, \partial W = M^- \amalg N]} 
        \pi_0 \Map(W_{[0,1]},X; \gp \amalg \psi)/\Diff^+(W).
    \]
    Here $W$ only runs over equivalence classes of abstract bordisms 
    that go from $M$ to $N$
    and the maps $W \to X$ are required to restrict to $\gp$ and $\psi$,
    respectively.
    There is a canonical functor $h_1 \Born{d}(X) \to \Cob_d(X)$ 
    defined by forgetting the embedding on objects and sending
    bordisms to their equivalence class.
    It follows from the above that this functor is fully faithful.
    Moreover, it is essentially surjective because every closed
    oriented $(d-1)$-manifold can be embedded into $\cube$.
    
    Moreover, if we define a $\gC$-structure on $\Cob_d(X)$ 
    the same way we did on $\Born{d}(X)$, then this is an equivalence 
    of $\gC$-categories, and the $\gC$-structure on $\Cob_d(X)$
    corresponds to the standard symmetric monoidal structure 
    defined by disjoint union.
    We refer the reader to the proof of \cite[proposition 8.20]{CS19}
    for more details on how this is compatible with the $\gC$-structure.
\end{proof}

\subsection{The cobordism hypothesis}
\begin{lem}\label{lem:BordHasDuals}
    For all $X$ the symmetric monodial $\infty$-category $\Born{d}(X)$  has duals.
\end{lem}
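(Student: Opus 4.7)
The plan is to exhibit, for every object, an explicit dual together with evaluation and coevaluation bordisms, and to verify the triangle identities via a straightening isotopy. Since dualisability is a property of the homotopy category $h_1 \Born{d}(X)$, which is unaffected by Segal completion, it suffices to work with the uncompleted presheaf $\cc{P}\Born{d}(X)$.

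First I would unpack the definition at level $([0],\gle{1})$: by the cylindricity condition, an object of $\Born{d}(X)$ corresponds to a closed oriented $(d-1)$-dimensional submanifold $N \subset \IR^\infty$ equipped with a map $\gp: N \to X$. Its dual will be $N^\vee$, the same underlying submanifold equipped with the opposite orientation and the unchanged $X$-structure. The symmetric monoidal unit $\1$ is represented by the empty manifold.

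Next I would construct the evaluation $e: N \sqcup N^\vee \to \1$ and coevaluation $c: \1 \to N^\vee \sqcup N$ from the cylinder $N \times [0,1]$, equipped with the $X$-structure pulled back along the projection to $N$, embedded into $\IR \times \IR^\infty$ so that both boundary components lie over the same end of the interval. Bending this cylinder so that both ends project to a single endpoint of the time axis yields $e$; its mirror image yields $c$. A bending profile that is constant near the collar makes admissibility automatic, and there is enough room in $\IR^\infty$ to keep the two halves of the bent cylinder disjoint from any pre-existing manifold.

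The decisive step is the verification of the triangle identities $(\id_N \otimes e) \circ (c \otimes \id_N) = \id_N$ and its mirror for $N^\vee$. The composite on the left-hand side produces an S-shaped bent cylinder from $N$ to $N$, and the equality in $h_1 \Born{d}(X)$ reduces to a standard straightening isotopy of this bent cylinder into $N \times [0,1]$ within the space of admissible embedded bordisms; morphisms in the homotopy category are precisely path components of this space relative to fixed boundary. I expect the main obstacle to be purely organisational: arranging this isotopy so that admissibility, disjointness of strata, and the pulled-back $X$-structure are all preserved simultaneously. Since the entire construction and isotopy are carried out parametrically over the map $\gp$, this should be routine rather than substantial, and only requires the mild care typical of embedded-bordism arguments.
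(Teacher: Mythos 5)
Your proposal is correct and takes essentially the same approach as the paper: reduce to the homotopy category, take the orientation-reversed manifold as the dual, use a cylinder as both evaluation and coevaluation, and verify the triangle identities by a straightening isotopy. The paper's own proof is terser (it simply asserts the verification is "straight-forward"); your fleshing out of the admissibility/isotopy bookkeeping is a reasonable expansion of the same argument, modulo a harmless flip in the ev/coev ordering convention.
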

\begin{proof}
    By definition \ref{defn:dual} we need to show that every object in the homotopy category
    $h_1 \Born{d}(X)$ is dualisable. Because of lemma \ref{lem:hBord=Cob}
    we can equivalently construct duals for the category $\Cob_d(X)$.
    This is well-known to those familiar with the cobordism hypothesis,
    but we include a brief description in the interest of completeness.
    
    Let $(M, \gp)$ be some object of $\Cob_d(M)$. Then we will show that 
    $(M^-, \gp)$ is the dual. 
    As evaluation $\ev:(M^- \amalg M, \gp \amalg \gp) \to (\emptyset, \emptyset)$ 
    we take the manifold $W = M \times [0,1]$ with the boundary identification
    $\partial W \cong (M^- \amalg M)^- \amalg \emptyset$.
    This will be equipped with the map $\gp \circ \pr_M: M \times [0,1] \to M \to X$.
    Similarly, the coevaluation 
    $\co:(\emptyset, \emptyset) \to (M \amalg M^-, \gp \amalg \gp)$
    is $W$, but with the boundary identification 
    $\partial W \cong \emptyset \amalg (M \amalg M^-)$.
    The two compositions $(\id_M \amalg \ev) \circ (\co \amalg \id_M)$
    and $(\ev \amalg \id_{M^-}) \circ (\id_{M^-} \amalg \co)$ 
    are equivalent to the identity bordisms $M \times [0,1]$ and
    $M^- \times [0,1]$, respectively.
    This is best checked by drawing out the bordisms 
    as illustrated in figure \ref{fig:Zorro}.
\end{proof}

\begin{figure}[ht]
    \centering
    \footnotesize
    \def\svgwidth{.9\linewidth}
    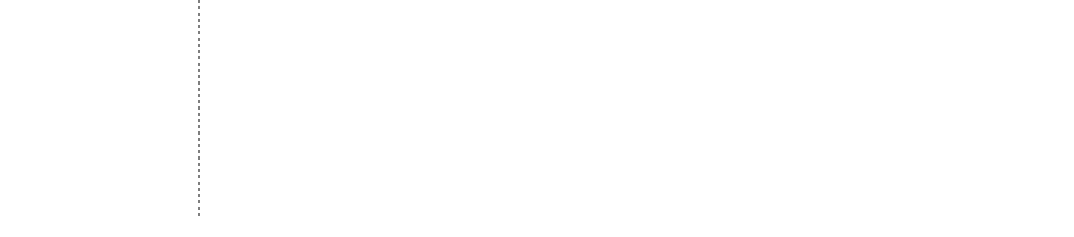
    \caption{This graphic verifies the relation 
    $(\ev \amalg \id_M) \circ (\id_M \amalg \co) = \id_M$
    in the homotopy category $h\Born{d}$ for the case $d=1$
    and $M=*_+$.
    The general case can be shown by taking the product of
    the above with an arbitrary manifold $M$.}
    \label{fig:Zorro}
\end{figure}

\begin{remark}
    One can also equip bordism categories with more general tangential structures.
    In the context of the cobordism 
    hypothesis one usually uses the framed bordism category $\Bfrn{d}$
    where each $W$ comes with a framing.
    There always is a functor $\Bfrn{d}(X) \to \Born{d}(X)$ 
    that forgets the framing and only remembers the induced orientation.
    For $d=1$ the space of framings on a fixed $W$ that are compatible with a preferred orientation
    is contractible. Therefore $\Bfrn{d}(X) \to \Born{d}(X)$ is a level-wise equivalence of 
    Segal spaces and the associated symmetric monoidal \icats\ are equivalent.
    As we will only need the one-dimensional cobordism hypothesis
    we can therefore state it using the oriented bordism category $\Born{1}(X)$
    rather than $\Bfr(X)$.
\end{remark}

\begin{nrem}
    To formulate the cobordism hypothesis in dimension $1$ we need to construct
    a natural map $X \to \obj^\fd(\Bor(X))$ for any space $X$.
    As we saw above all objects in $\Bor(X)$ are dualisable, and so the space 
    of dualisable objects is simply $\Bor(X)_0$.
    We can define the map $f: X \to \Bor(X)_0$ by sending $x \in X$ to the 
    (positively oriented) manifold $W :=\IR\times\{0\} \subset \IR\times \cube$
    equipped with the constant map $\gp_x: W \to \{x\} \to X$.
    This is natural in $X$ and induces a map:
    \[
        \hom_{\Catit}(\Bor(X), \cC) \xrightarrow{\obj^\fd} \hom_{\Spc}(\obj^\fd(\Bor(X)), \obj^\fd(\cC)) \xrightarrow{f^*} \hom_{\Spc}(X,\obj^\fd(\cC)).
    \]
\end{nrem}

\begin{thm}[Cobordism hypothesis in dimension $1$,{ \cite{LurCH}, for 
    more details see \cite{Har12}}]\label{thm:cobhyp}
    The map constructed above is an equivalence for all $X\in \Top$ and $\cC \in\Catit$:
    \[
        \hom_{\Catit}(\Bor(X), \cC) \simeq \hom_{\Spc}(X,\obj^\fd(\cC)).
    \]
\end{thm}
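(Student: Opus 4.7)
The plan is to reduce to the $0$-dimensional statement of Lemma \ref{lem:0cobhyp} and then argue that $\Bfr(X)$ is generated from its space of objects by freely adjoining duality data together with the circle, which contributes traces. The first step is to unpack what the claimed map does: a symmetric monoidal functor $F:\Bfr(X)\to\cC$ induces on dualisable objects an $E_\infty$-map $\EPi(X\times S^0)\simeq \obj^\fd(\Bfr(X))\to \obj^\fd(\cC)$, and such an $E_\infty$-map is determined by its restriction to $X\times\{+\}\subset X\times S^0$, since the $\{-\}$-component is forced to land on duals. Thus the map $f^*$ is essentially surjective on $\pi_0$ more or less by construction.

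To produce an inverse, I would take a map $g:X\to \obj^\fd(\cC)$, extend it to $X\times S^0\to\obj^\fd(\cC)$ by sending $(x,-)$ to any chosen dual of $g(x)$, and invoke the universal property of the free $E_\infty$-algebra to obtain a map of $E_\infty$-spaces $\obj^\fd(\Bfr(X))\to\obj^\fd(\cC)$. To lift this from objects to a full symmetric monoidal functor, I would exploit the fact that every framed $1$-bordism decomposes via Morse theory into a disjoint union of elementary pieces: evaluation and coevaluation bordisms (uniquely determined by the contractible space of duality data in $\cC$), cylinders (determined by their behaviour on objects), and closed circles (contributing traces). Each of these assignments is canonical up to contractible choice once $g$ is fixed.

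The main obstacle is coherence: matching the entire complete Segal space structure of $\Bfr(X)$ with the appropriate choices of duality and trace data in $\cC$ at every simplicial level, which amounts to an inductive verification of the naturality of the construction in the moduli of bordisms. A direct proof would essentially reproduce the induction-on-dimension strategy from Lurie's sketch \cite{LurCH} in the simpler case $d=1$, using the crucial contractibility of the space of duals at each stage. Rather than reproduce this argument, I would cite the full proof by Harpaz \cite{Har12}, which carries it out in the complete Segal space formalism used here; this is the approach taken in the statement above.
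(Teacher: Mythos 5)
The paper does not prove this theorem: it is stated purely by citation to Lurie and Harpaz, exactly as your proposal ultimately does. Your preliminary sketch of the inverse construction (restriction to objects via Lemma~\ref{lem:0cobhyp}, Morse decomposition into elementary bordisms, contractibility of duality data) is a reasonable précis of the strategy carried out in \cite{Har12}, though a couple of phrases are looser than they should be — the claim that essential surjectivity on $\pi_0$ holds "more or less by construction" is exactly the nontrivial content, and an arbitrary $E_\infty$-map out of $\EPi(X\times S^0)$ is genuinely determined by its restriction to all of $X\times S^0$ (not just $X\times\{+\}$); it is only after restricting to maps coming from symmetric monoidal functors that the $\{-\}$-component is constrained up to contractible choice. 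Since you end by deferring to the same references the paper cites, the approaches agree.
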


We will be particularly interested in the case $X = S^1$.
We now choose the positively oriented point 
$*_+ \in \Bor(S^1)$ and a preferred endomorphism $\ga: *_+ \to *_+$
as follows:
\begin{defn}
    The object $*_+$ is defined as
    $(t=0, \IR \times \{0\}, \gp) \in (\Bor(S^1))_0^{\gle{1}}$
    where $\IR$ is given the standard orientation 
    and $\gp:\IR \to S^1$ is the constant map that sends everything to the base point
    $0 \in \IR/\IZ =: S^1$.
    The endomorphism  $\ga: *_+ \to *_+$ 
    has as underlying bordism is the trivial bordism 
    $((t_0=0, t_1=1), W = \IR \times \{0\})$ of length $1$.
    We equip this with the labelling $\psi: \IR \to S^1 = \IR/\IZ$ 
    defined as $\psi(x) = [x]$ for $x \in [0,1]$ and $\psi(x) = [0]$ otherwise.
    
    Since $\ga$ is represents an invertible morphism in the homotopy category
    this defines a point $(*_+, \ga) \in \mcA(\Bor)$ in the space of automorphisms.
\end{defn}

\begin{cor}\label{cor:cobhyp-A}
    For every $Y \in \CSS^\ot$ evaluating on 
    $(*_+,\ga)\in \mcA^\fd(\Bor(S^1))$ yields an equivalence
    \[
        \hom_{\CSS^\ot}(\Bor(S^1), Y) \simeq \mcA^\fd(Y).
    \]
\end{cor}
\begin{proof}
    Consider the full subcategory $Y^\fd \subset Y$ on the dualisable objects.
    By the cobordism hypothesis \ref{thm:cobhyp} and lemma \ref{lem:functors}
    we have equivalences
    \[
        \hom_{\CSS^\ot}(\Bor(S^1), Y) \simeq \hom_{\Spc}(S^1, (Y^\fd)_0^{\gle{1}})
        \simeq \mcA^\fd(Y).
    \]
    This equivalence corresponds to the evaluation at the point of $\mcA(\Bor(S^1))$,
    which is represented by the free loop $\gc: S^1 \to \obj(\Bor(S^1))$
    sending $x$ to the manifold $*_+$ labeled by $l:* \to \{x\} \subset S^1$.
    Following through the equivalence in the proof of lemma \ref{lem:A=L}
    we see that this is in the same connected component as the preferred endomorphism
    $(\ga:*_+ \to *_+) \in \mcA(\Bor(S^1))$.
    Therefore the natural transformation described in the claim 
    is an equivalence as well.
\end{proof}

\subsection{The cobordism category with marked points}
We now introduce a variant of the bordism category where morphisms 
$W: M \to N$ come equipped with a finite subset 
$A \subset W\setminus \partial W$ of marked points. 
Then we show that it is a complete Segal space and use it to formulate
a special case of Lurie's cobordism hypothesis with singularities in our setting.

\begin{defn}\label{defn:marked-submfd}
    For $d\ge 0$ we let $\Psi_d^m$ denote the 
    \emph{space of oriented marked $d$-dimensional submanifolds}  of $\IR^\infty$:
    a point in $\Psi_d^m$ is a tuple $(W,A)$ 
    where $W \subset \IR^{1+\infty}$ is a $d$-dimensional oriented submanifold
    and $A \subset W$ is a $0$-dimensional submanifold.
    We topologise this with a plot topology as in definition  \ref{defn:space-of-submfds}
    except that now both the graph obtained from $W$ 
    and the graph obtained from $A$ have to be smooth.
\end{defn}

    Note that $\Psi^m$ can also be thought as the subspace of $\Psi_d \times \Psi_0$ 
    containing precisely those tuples $(W,A)$ where $A \subset W$.

\begin{defn}
    We define a simplicial $\gC$-space $\Bormn{d} \in \sP(\gD\times \gC)$ by the functor 
    $\gD^{op} \times \gC^{op} \to \Top$ 
    that sends $([n], \gle{k})$ to the topological space of tuples% 
    \[
        (t, (W_1,A_1), \dots, (W_k,A_k))
    \]
    where $t\in \IR^{[n]}$ and the $W_i$ are pairwise disjoint 
    $d$-dimensional oriented submanifolds of $\IR \times \cube$,
    each of which is admissible with respect to $t$.
    The $A_i$ are finite subsets of $W_i$ such that 
    $\pi(A_i) \subset \bigcup_{j=1}^n (t_{j-1}, t_j)$.
    This is topologised as a subspace of $\IR^{[n]} \times (\Psi^m)^k$.
    Functoriality in $([n], \gle{k})$ is as in definition \ref{defn:PBord},
    except that we also need to forget all points of $A_i$ that 
    lie outside of $(t_0,t_n)$ after applying a face operator.
\end{defn}

We have an analogue of \ref{thm:Bord-Segal}:
\begin{proposition}\label{prop:Bordm-CSS}
    For any $d\ge0$ the simplicial space $(\Bormn{d})^{\gle{1}}$
    is a Segal space and the functor 
    $\Bormn{d}:\gD^{op} \times \gC^{op} \to \Spc$
    is a special $\gC$-object in Segal spaces.
    When $d\le 2$ this Segal space is moreover complete.
\end{proposition}
\begin{proof}
    We will show how the different parts of the proof of theorem \ref{thm:Bord-Segal}
    generalise to the marked case.
    Let us write $B_\cd := (\Born{d})^{\gle{1}}_\cd$ 
    and $B_\cd^m := (\Bormn{d})^{\gle{1}}_\cd$.
    We let $B_n' \subset B_n$ and $B_n^{m\prime} \subset B_n^m$ 
    be the length $1$ versions as in the proof of theorem \ref{thm:Bord-Segal}.
    
    Our proof of the Segal property relied on the fact that the maps
    $s,t: B_1' \to B_0'$ that send a bordism of length one to its source
    or target are fibrations. 
    In the next paragraph we will argue that 
    the map $p: B_1^{m\prime} \to B_1'$ that forgets the markings is 
    a locally trivial fiber bundle whose fiber at a point $W \in B_1'$
    is the unordered configuration space $\Conf_*(W_{(0,1)})$.
    In particular $p$ is a Serre fibration.
    Since $B_0 = B_0^m$ it follows that the source and target maps of $B_\cd^m$
    are composites of Serre fibrations $B_1^{m\prime} \to B_1' \to B_0 = B_0^m$.
    Therefore the same proof as in theorem \ref{thm:Bord-Segal} applies
    and $B_\cd^m$ is a Segal space.
    
    To see that $p: B_1^{m\prime} \to B_1'$ is a fiber bundle let $W \in B_1'$
    any point in the base. We choose a tubular neighbourhood 
    $N \subset \IR\times \cube$ of $W$ and an identification of $N$ 
    with the normal bundle $\nu_W$.
    For any smooth section of the normal bundle $f \in \gC(\nu_W)$ 
    the image $f(W) \subset N \subset \IR \times \cube$ is a smooth submanifold
    and for certain admissible $f$ it is an element $f(W) \in B_1'$:
    let $U_W \subset \gC(\nu_W)$ denote the subset of such $f$.
    This is in bijection with the set $U_W' \subset B_1'$ of those 
    $V \in B_1'$ such that there is an $f \in \gC(\nu_W)$ for which $f(W) = V$.
    It follows from \cite[Lemma A.5]{CS19} 
    that $U_W$ and $U_W'$ are homeomorphic and that
    $U_W'$ is a neighbourhood of $W \in B_1'$.
    (For this it is important to note that any $V \in B_1'$ is uniquely determined
    by its intersection with the compact subspace $[0,1]^{1+\infty} \subset \IR^{1+\infty}$.)
    All that is left to do is to trivialise $p:B_1^{m\prime} \to B_1'$ over $U_W'$.
    Indeed we have the following homeomorphism:
    \[
        U_W \times \Conf_*(W_{(0,1)}) \cong p^{-1}(U_W) , \quad
        (f, A \subset W_{(0,1)}) \mapsto (f(W), f(A)). 
    \]

    Next, we check the completeness condition for $B_\cd^m$.
    For this it is useful to think of $B_\cd$ 
    as the subspace of $B_\cd^m$ given by the manifolds with empty configurations.
    From the definition of the topology we can see that we cannot change
    the number of points in a configuration by a continuous path
    and so $B_n \subset B_n^m$ is a union of connected components 
    for each layer $n$.
    Moreover, note that for a morphism to be homotopy invertible in $B_\cd^m$
    both it and its inverse have to be labeled by an empty configuration:
    this is true because composition adds the number of points in the configuration
    and the identity morphisms have empty configurations.
    Therefore, the spaces of equivalences agree $(B_1^m)^{eq} = B_1^{eq}$.
    Since we also have $B_0 = B_0^m$ this means that $B_\cd^m$
    is complete if and only if $B_\cd$ is.
    We have argued why this is true for $d \le 2$ in \ref{thm:Bord-Segal}.

    Finally, the specialness condition for the $\gC$-direction is again standard.
\end{proof}

We pick the positively oriented point $*_+$ as a preferred object in $\Bormn{1}$
and construct an endomorphism $\gb: *_+ \to *_+$.
\begin{defn}
    The endomorphism $\gb = (t,W,A):*_+ \to *_+$ is defined by 
    $(t_0,t_1)=(0,1)$, $W = \IR \times \{0\}$, and 
    $A = \{\tfrac{1}{2}\} \times \{0\} \subset W$.
    This defines an element $(*_+, \gb) \in \mcE(\Borm)$ in the space of endomorphisms
    from definition \ref{defn:functors}.
\end{defn}

We will now construct a functor $L:\Borm \to \Bor(S^1)$ that sends a marked bordism
$(W,A):M \to N$ to $(W,l):M \to N$ where
the labeling $l:W \to S^1$ maps most of $W$ to the basepoint, 
except for a small neighbourhood of $A$ where it loops around
the circle $S^1$ once per point in $A$.

\begin{construction}
    To define the functor $L$ we first need to enhance the $\infty$-category
    $\Bormn{1}$ to contain the (contractible) data of disjoint small 
    $\eps$-balls around the marked points.
    For any oriented $1$-manifold $W \subset \IR^{1+\infty}$ 
    we can define a signed distance function
    $d_W:W \times W \to \IR \amalg \{\infty\}$ 
    by setting $d(x,y)=\pm l$ where $l$ is the length of the shortest path from $x$ to $y$
    and the sign depends on whether that path agrees with the orientation of $W$.
    Let $\c{B}$ be the simplicial $\gC$-space defined just like $\Bormn{1}$
    except that each $(W_i, A_i)$ comes with a function $\eps_i:A_i \to (0,\infty)$
    satisfying, for all $a \in A_i$:
    \[
        2\eps_i(a) < \min\{ |d_W(a,b)| \;|\; b \in A_i \setminus\{a\}\} \qand
        \eps_i(a) < \min\{ |\pi(a) - t_j| \;|\; j\in \{0,\dots, n\}\}.
    \]
    Because the space of possible choices for $\eps_i$ is convex,
    the forgetful map $\c{B} \to \Borm$ is a level-wise equivalence.
    As a result $\c{B}$ is a complete Segal space.
    
    We will now construct a functor $L:\c{B} \to \Bor(S^1)$.
    Let $(t, (M_1,A_1,\eps_1), \dots, (W_k, A_k, \eps_k))$ be a point in 
    $\c{B}_n^{\gle{k}}$. Then we define $l_i:W_i \to S^1 = \IR/\IZ$ as follows:
    \[
        l_i(x) = \begin{cases}
            \frac{1}{2} + \frac{d(a,x)}{2\eps_i(a)}
            & \text{ if there is } a \in A_i \text{ with } |d(a,x)| \le \eps_i(a)
           \\
            0 & \text{ otherwise.}
        \end{cases}
    \]
    One checks that this is continuous and loops around $S^1$ once 
    in every $\eps_i(a)$-ball.
\end{construction}

We can now state our interpretation of Lurie's cobordism hypothesis with singularities
in the one-dimensional case.
As explained in warning \ref{warning} some of our theorems are conditional 
on this conjecture.
\begin{conj}\label{conj:singcobhyp}
    For any $\infty$-category $\cC$ evaluating on the morphism $(W,A):*_+ \to *_+$
    yields an equivalence
    \[
        \Fun(\Borm, \cC) \simeq \mcE(\cC^\fd).
    \]
\end{conj}

\begin{remark}[Comparison to Lurie's conjecture]
    We will now informally derive our formulation from Lurie's more general 
    cobordism hypothesis with one type of singularity as stated in
    \cite[Proposition 4.3.1]{LurCH}.
    
    In the general setting one fixes a $d-1$-dimensional manifold $Y$
    and then allows bordisms to have singularities of the form of a cone on $Y$.
    We will only need this in dimension $1$ and for the case $Y=S^0$.
    Of course a cone on $S^0$ is diffeomorphic to $D^1$ 
    and so instead of introducing actual singularities in a bordism $W$
    it suffices to keep track of the cone points.
    This is the set of marked points $A \subset W$. 
    A small ball around each marked point $a \in A$ is then is a cone on $S^0$.
    The cobordism hypothesis with singularities says that there is an equivalence 
    between functors $\Borm \to \cC$ and functors $\mcZ: \Bor \to \cC$
    together with a choice of morphism $\ga: \1 \to \mcZ(S^0)$.
    
    By the cobordism hypothesis without singularities
    $\Funo{\Bor,\cC}$ is equivalent to $(\cC^\fd)^\simeq$
    via the evaluation on the positively oriented point.
    Let $x\in\cC$ denote $\mcZ(*_+)$. Then the value of $\mcZ$ on $S^0$ is 
    $\mcZ(S^0) \cong \mcZ(*_+ \amalg *_-) \cong \mcZ(*_+) \ot \mcZ(*_-) \cong x \ot x^\vee$.
    By duality $\hom(\1, x \ot x^\vee)$ is equivalent to $\hom(x, x)$,
    so instead of $\ga:\1 \to \mcZ(S^0) = x \ot x^\vee$ we may equivalently 
    choose $\ga^\#:x \to x$.
    
    In summary, the data of a symmetric monoidal functor $\Borm \to \cC$ 
    is equivalent to that of a dualisable object $x \in \cC^\fd$
    together with an endomorphism $\ga^\#:x \to x$.
    In other words, to a point $(x,\ga^\#) \in \mcE(\cC^\fd)$.
\end{remark}

\begin{nrem}\label{nrem:mcL-commutes}
    By construction the functor $L:\Borm \to \Bor(S^1)$ sends the preferred
    endomorphism $\gb:*_+ \to *_+$ in $\Borm$ to the preferred automorphism 
    $\ga: *_+ \to *_+$ in $\Bor(S^1)$.
    Therefore the following diagram commutes:
    \[
        \begin{tikzcd}
            \Funo{\Bor(S^1), \cC} \ar[r, "\simeq"] \ar[d, "L^*"] & \mcA(\cC^\fd) \ar[d]\\
            \Funo{\Borm, \cC} \ar[r, "\simeq"] & \mcE(\cC^\fd)
        \end{tikzcd}
    \]
    Moreover by the two variants of the cobordism hypothesis 
    the horizontal morphisms in this diagram are equivalences for all $\cC$.
    This means that via the Yoneda lemma the functor $L$ 
    corresponds to the natural transformation $\mcA^\fd \Rightarrow \mcE^\fd$.
\end{nrem}

%--------------------------------------------------------------------
%--------------------------------------------------------------------
%--------------------------------------------------------------------

\section[1-categorical classification]{$1$-categorical classification}
\label{sec:1class}
In this section we complete the proof of the classification of $1$-categorical tracelike transformations sketched in the introduction.

\begin{defn}
    The category $\CoZ$ has as objects finite sets $M$ equipped with an orientation $M \to \{+,-\}$.
    A morphism $X:M \to N$ is a diffeomorphism class of 
    $1$-dimensional oriented bordisms $X$ equipped with $\partial X \cong M^- \amalg N$
    and a relative integral cohomology class $\xi\in H^1(X, M\amalg N)$. 
    The composition of two morphisms $(X,\xi):M \to N$ and $(Y,\zeta):N \to L$ 
    is the morphism $(X\amalg_N Y,\chi)$ where $\chi$ is defined as the image of $(\xi,\zeta)$ under
    \[
        H^1(X,M\amalg N) \oplus H^1(Y,N\amalg L) \cong H^1(X \amalg_N Y, M \amalg N \amalg L) \to H^1(X \amalg_N Y, M \amalg L).
    \]
    The symmetric monoidal structure is defined by taking disjoint unions, the unit is the empty set.
\end{defn}

\begin{nrem}\label{nrem:ptga}
    Since the bordisms $X:M\to N$ are oriented $1$-manifolds we can canonically identify $H^1(X,M\amalg N)$ with $\IZ^{\pi_0 X}$.
    Therefore choosing $\xi$ is equivalent to labelling every connected component of $X$ by an integer.
    The composition adds integers of connected components that are joint in the process of glueing bordisms.
    Graphically this can be described as:
    \[
        \begin{tikzpicture}[yscale=0.75]
            \def\diam{0.05}
            \node[below] at (0,-.0) {$+$};
            \draw[fill] (0,0) circle (\diam);
            \draw [->] (0.1,0) to [out=0, in=180] node[above]{$2$} (1.9,1) ;
            \draw [<-] (1.9,0) to [out=180, in=180] node[left]{$-1$} (1.9,-1);
            \draw[fill] (2,1) circle (\diam);
            \draw[fill] (2,0) circle (\diam);
            \draw[fill] (2,-1) circle (\diam);
            \node[below] at (2,1) {$+$};
            \node[below] at (2,0) {$+$};
            \node[below] at (2,-1) {$-$};
            \node at (3,0) {$\circ$};
            \draw[fill] (4,1) circle (\diam);
            \draw[fill] (4,0) circle (\diam);
            \draw[fill] (4,-1) circle (\diam);
            \node[below] at (4,1-.0) {$+$};
            \node[below] at (4,-.0) {$+$};
            \node[below] at (4,-1.0) {$-$};
            \draw [->] (4.1,0) to [out=0, in=0] node[right]{$0$} (4.1,-1);
            \draw[fill] (6,1) circle (\diam);
            \draw[fill] (6,0) circle (\diam);
            \draw[fill] (6,-1) circle (\diam);
            \node[below] at (6,1-.0) {$+$};
            \node[below] at (6,-.0) {$+$};
            \node[below] at (6,-1.0) {$-$};
            \draw [->] (4.1,1) to [out=0, in=180] node[below]{$1$} (5.9,1);
            \draw [<-] (5.9,0) to [out=180, in=180] node[left]{$1$}  (5.9,-1);
            \node at (7,0) {$=$};
            \draw[fill] (8,0) circle (\diam);
            \node[below] at (8,-.0) {$+$};
            \draw [->] (8.1,0) to [out=0, in=180] node[above]{$3$} (9.9,1);
            \draw [<-] (9.9,0) to [out=180, in=180] node[left]{$1$} (9.9,-1);
            \draw [<-] (8.699,-1) to node[above]{$-1$} (8.7,-1);
            \draw (8.7,-0.6) circle (0.4);
            \draw[fill] (10,1) circle (\diam);
            \draw[fill] (10,0) circle (\diam);
            \draw[fill] (10,-1) circle (\diam);
            \node[below] at (10,1-.0) {$+$};
            \node[below] at (10,-.0) {$+$};
            \node[below] at (10,-1.0) {$-$};
        \end{tikzpicture}
    \]
    We write $*_+$ for the object defined by one positively oriented point and $\ga:*_+ \to *_+$ for its automorphism defined by the trivial bordism labelled by the integer $1$.
\end{nrem}

\begin{defn}
    Let $\CoN \subset \CoZ$ denote the subcategory
    that contains all objects, but only those morphisms that are 
    labelled by non-negative integers under the identification 
    in \ref{nrem:ptga}.
\end{defn}

\begin{nrem}\label{nrem:choosesection}
    We will now show that $\CoZ$ is equivalent to the homotopy category of $\Bor(S^1)$
    and that $\CoN$ is equivalent to the homotopy category of $\Borm$.
    Recall that in the process of defining $h_1 X$ for a complete Segal space $X:\gD^{op} \to \Spc$ we had to choose a section $o:\pi_0 X_0 \to X_0$.
    For $\Bor(S^1)$ we may choose $o$ to take values $(t,(M,\gp))$ such that 
    $\gp:M \to S^1$ only hits the base-point of $S^1$.
\end{nrem}

\begin{lem}\label{lem:h1BfrS1}
    There is a commutative diagram of symmetric monoidal functors
    \[
    \begin{tikzcd}
        h_1\Borm \ar[d, "G'"] \ar[r, "h_1L"] & h_1\Bor(S^1) \ar[d, "G"] \ar[r] 
        & h_1\Bor \ar[d, "F"]\\
        \CoN \ar[r] & \CoZ \ar[r] & \mi{Cob}_1
    \end{tikzcd}
    \]
    where the vertical functors are equivalences.
\end{lem}
\begin{proof}
    The rightmost vertical functor is the symmetric monoidal equivalence 
    from \ref{lem:hBord=Cob}. To prove the the lemma we have to provide
    compatible lifts $G$ and $G'$.
    
    For $G$ this means that we have to give, for each 
    $[X,\gp]:(M, \gp_{|M}) \to (N, \gp_{|N})$ a class $\ga \in H^1(X, M\amalg N)$.
    Because of the choice we made in \ref{nrem:choosesection} we may assume 
    that both $\gp_{|M}$ and $\gp_{|N}$ are the constant maps 
    to the basepoint $* \in S^1$. Therefore, the pullback of the canonical generator
    $[S^1] \in H^1(S^1,*)$ gives a well-defined class 
    $\ga := \gp^*(S^1) \in H^1(X, M \amalg N)$.
    This construction is compatible with gluing and disjoint union and 
    therefore yields a symmetric monoidal functor $G$.
    Moreover, the assignment $[\gp] \mapsto \gp^*[S^1]$ defines a bijection
    between $\pi_0\Map((X, M\amalg N), (S^1,*))$ and $H^1(X, M \amalg N)$ 
    for any bordism $X$ and hence $G$ is an equivalence of categories.
    
    To obtain the functor $G'$, first consider the composite $G \circ h_1L$.
    Concretely, let $(W, A): M \to N$ be some marked bordism. 
    Then $L(W,A) = (W, \gp)$ is equipped with a labelling
    such that $\gp:W \to S^1$ winds around the circle once for each marking.
    This is done compatibly with the orientation and hence every connected 
    component $W_0 \subset W$ will be labeled in $(G\circ h_1L)(W,A)$
    by the number of elements of $W_0 \cap A$.
    Since this is non-negative it lies in the subcategory $\CoN$
    and the functor $G \circ h_1L$ can be factored through a unique 
    symmetric monoidal functor $G': h_1 \Borm \to \CoN$.
    This functor is an equivalence of categories because up to diffeomorphism
    the marking of a bordism is uniquely determined by the number 
    of marked points in each connected components.
\end{proof}

Note that warning \ref{warning} applies to the first line of the following proposition
as we use the conjectural cobordism hypothesis with singularities.

\begin{proposition}\label{prop:1class}
    There are compatible isomorphisms of commutative monoids
    \[
    \begin{tikzcd}
        \eTTo \ar[r, "\cong"] \ar[d]
        & \Sc(\mi{Cob}_1^\IN) \ar[r, "\cong"] \ar[d]
        & \Mfd_1^\IN/\Diff^+ \ar[r, "\cong"] \ar[d]
        & \IN[x] \ar[d] \\
        \TTo \ar[r, "\cong"] 
        & \Sc(\mi{Cob}_1^\IZ) \ar[r, "\cong"] 
        & \Mfd_1^\IZ/\Diff^+ \ar[r, "\cong"] 
        & \IN[x^{\pm 1}] 
    \end{tikzcd}
    \]
    under which the tracelike transformation $\gT^{k_1,\dots,k_n}$ is sent to 
    $\sum_{i=1}^n x^{k_i}$.
\end{proposition}
\begin{proof}
    We begin by constructing the isomorphisms in the second line,
    the first line is then obtained similarly.
    As a consequence of lemma \ref{lem:h1BfrS1}, the $h_1\dashv N$
    adjunction \ref{cor:h1N}, the cobordism hypothesis \ref{cor:cobhyp-A}
    and lemma \ref{lem:recov} we have isomorphisms natural
    in $\mcC \in \Cat_1^\ot$:
    \begin{align*}
        \pi_0 \mi{Fun}_1^\ot(\CoZ, \mcC) 
        &\stackrel{\text{\ref{lem:h1BfrS1}}}{\cong} \pi_0\mi{Fun}_1^\ot(h_1\Bor(S^1), \mcC) \stackrel{\text{\ref{cor:h1N}}}{\cong} \pi_0\Funo{\Bor(S^1), N\mcC} \\
        &\stackrel{\text{\ref{cor:cobhyp-A}}}{\cong}  \pi_0 \mcA^\fd(N\mcC) 
        \stackrel{\text{\ref{lem:recov}}}{\cong} A^\fd(\mcC).
    \end{align*}
    The object $*_+$ and automorphism $\ga$ from \ref{nrem:ptga} define an element of $A^\fd(\CoZ)$; this is precisely the element that corresponds to $\id_{\CoZ}$ when we apply the above bijection in the case $\mcC=\CoZ$.

    We now apply the $1$-categorical coYoneda lemma 
    for the $1$-category $h_1\Cat_1^\ot$. 
    (See theorem \ref{thm:coYoneda} where we recall the $\infty$-categorical version.)
    Recall that in this category objects are symmetric monodial
    $1$-categories and morphisms are \emph{isomorphism classes} 
    of symmetric monoidal functors. 
    So the above $A^\fd(\mcC)$ can be described as 
    $\Hom_{h_1\Cat_1^\ot}(\CoZ, \mcC)$ in this category.
    Therefore the coYoneda lemma implies that sending $T$ to 
    $T_{\CoZ}(*_+,\ga)$ defines a bijection:
    \[
        \TTo = \Hom_{\mi{Fun}(h_1\Cat_1^\ot,\Sets)}(A^\fd, \Sc)
            \cong \Hom_{\mi{Fun}(h_1\Cat_1^\ot,\Sets)}(\Hom_{\Cat_1^\ot}(\mi{Cob}_1^\IZ,\blank), \Sc) 
            \cong \Sc(\mi{Cob}_1^\IZ).
    \]
    This is an isomorphism of monoids since 
    the monoid structure on $\TTo$ is defined by pointwise multiplication.

    The scalars of $\CoZ$ are diffeomorphism classes of closed oriented $1$-manifolds labelled by integers, this is precisely how the set $\Mfd_1^\IZ/\Diff^+$ was defined in the introduction.
    Recall that $\IN[x^{\pm 1}]$ is the free commutative monoid on the set 
    $\{\dots, x^{-1}, x^0, x^1, x^2, \dots\} \cong \IZ$.
    We define a homomorphism $\IN[x^{\pm 1}] \to \Mfd_1^\IZ/\Diff^+$ by sending 
    $x^k$ to the circle labelled by $k$.
    This is a bijection since all closed $1$-manifolds 
    can be written as the disjoint union of circles.
    
    This establishes the isomorphisms in the second line of the proposition.
    The first line is obtained similarly, except that we use the cobordism hypothesis
    with singularities \ref{conj:singcobhyp} instead of \ref{cor:cobhyp-A}.

    To complete the proof we need to compute the value of the tracelike transformation $\gT^{k_1,\dots,k_n}$ on $(*_+, \ga)$.
    In other words, we have to compute $\Tr(\ga^{k_1}) \circ \dots \circ \Tr(\ga^{k_n})$ using the classical definition of the trace.
    Using the evaluation and coevaluation provided in \ref{lem:BordHasDuals}
    one sees that the trace of a bordism
    $(X,\partial X \cong M \amalg M^-)$ is given by gluing
    the two boundaries of $X$.
    See also \cite{ST12} for a different perspective on this.
    In the case at hand we compute:
    \[
        \begin{tikzpicture}[scale = 0.9]
            \def\s{0}
            \def\d{0.5} % composition distance
            \def\e{7} % the other distance
            \def\diam{0.05}
            \foreach \x in {0,\d,\d+2,2*\d+2,\e,\d+\e,\d+\e+2,2*\d+\e+2}
            {
                \node[above] at (\x,1) {$+$};
                \node[above] at (\x,0) {$-$};
                \draw[fill] (\x,1) circle (\diam);
                \draw[fill] (\x,0) circle (\diam);
            }
            \foreach \x in {2*\d+3,3*\d+3,\e+2*\d+3,\e+3*\d+3}
            {
                \node[above] at (\x,1) {$-$};
                \node[above] at (\x,0) {$+$};
                \draw[fill] (\x,1) circle (\diam);
                \draw[fill] (\x,0) circle (\diam);
            }

            \draw [->] (0-\diam,0) to [out=180, in=180] node[left]{$0$} (0-\diam,1);
            \draw [->] (\d,1) to node[above]{$k_1$} (\d+2-\diam,1);
            \draw [<-] (\d+\diam,0) to [out=0, in=180] node[above]{$0$} (\d+2,0);
            \draw [<-] (2*\d+2,0) to [out=0, in=180] (2*\d+3-\diam,1);
            \draw [->] (2*\d+2,1) to [out=0, in=180] (2*\d+3-\diam,0);
            \draw [->] (3*\d+3+\diam,0) to [out=0, in=0] node[right]{$0$} (3*\d+3+\diam,1);
            
            \node at (\e/2+2.25,0.5) {$\dots$};
            
            \draw [->] (\e+0-\diam,0) to [out=180, in=180] node[left]{$0$} (\e+0-\diam,1);
            \draw [->] (\e+\d,1) to node[above]{$k_n$} (\e+\d+2-\diam,1);
            \draw [<-] (\e+\d+\diam,0) to [out=0, in=180] node[above]{$0$} (\e+\d+2,0);
            \draw [<-] (\e+2*\d+2,0) to [out=0, in=180] (\e+2*\d+3-\diam,1);
            \draw [->] (\e+2*\d+2,1) to [out=0, in=180] (\e+2*\d+3-\diam,0);
            \draw [->] (\e+3*\d+3+\diam,0) to [out=0, in=0] node[right]{$0$} (\e+3*\d+3+\diam,1);

            \def\k{14} 
            \node at (\k-1,0.5) {$=$};

            \draw [->] (\k-0.01,1) to node[above]{$k_1$} (\k,1);
            \draw (\k,0.5) circle (0.5);
            \node at (\k+1,0.5) {$\dots$};
            \draw [->] (\k+2-0.01,1) to node[above]{$k_n$} (\k+2,1);
            \draw (\k+2,0.5) circle (0.5);

            \end{tikzpicture}
    \]
    This verifies that $\gT^{k_1,\dots,k_n}(*_+,\ga)$ is a disjoint union of $n$ circles labelled by the integers $k_1, \dots, k_n$.
    Hence $\gT^{k_1,\dots,k_n}$ is sent to the polynomial 
    $x^{k_1} + \dots + x^{k_n}$ as claimed.
\end{proof}

%---------------------------------------------------------------------------------------------------------
%---------------------------------------------------------------------------------------------------------
\section[Infinity-categorical classification]{$\infty$-categorical classification}
In this section we combine the cobordism hypothesis with the Yoneda lemma 
and make the necessary computations to prove the main theorems.

\subsection{Applying the Yoneda lemma and the cobordism hypothesis}
\begin{nrem}
    We begin by recalling the `coYoneda lemma':
    using the Yoneda embedding $Y_\cC: \cC \to \sP(\cC)$ we can think of $Y_\cC(x) = \hom_\cC(\blank,x)$ as a functor from $\cC^{op}$ to $\Spc$.
    For $\cD:=\cC^{op}$  we then have $\hom_\cD(x, \blank): \cD \to \Spc$.
    The coYoneda lemma tells us how to compute natural transformations out of this functor.
    Recall that for two \icats\ $\cC$ and $\cD$ and functors $F,G:\cC\to \cD$ the space of natural transformations $F\Rightarrow G$ is
    \[
        \hom_{\Fun(\cC,\cD)}(F,G).
    \]
    Evaluating at a certain object $x\in \cC$ gives a functor $ev_x: \Fun(\cC,\cD) \to \cD$ and consequently a map $\hom_{\Fun(\cC,\cD)}(F,G) \to \hom_\cD(F(x), G(x))$.
\end{nrem}

\begin{thm}[coYoneda lemma, {\cite[Lemma 5.5.2.1]{LurHTT}}]\label{thm:coYoneda}
    For any functor $F:\cD \to \Spc$ and object $x\in \cC$ the following composition of evaluations is a natural equivalence
    \[
        \hom_{\Fun(\cD,\Spc)}( \hom_\cD(x, \blank), F) \xrightarrow{ev_x} \hom_\Spc( \hom_\cD(x,x), F(x)) \xrightarrow{ev_{\id_x}} F(x).
    \]
\end{thm}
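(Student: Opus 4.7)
This is the Yoneda lemma for the $\infty$-category $\cD^{op}$, applied to presheaves in $\sP(\cD^{op}) \simeq \Fun(\cD, \Spc)$: the functor $\hom_\cD(x,\blank)$ is the image of $x$ under the Yoneda embedding of $\cD^{op}$, and the claim amounts to saying that $\hom_\cD(x,\blank)$ corepresents the evaluation functor $ev_x \colon \Fun(\cD,\Spc) \to \Spc$, with universal element $\id_x \in \hom_\cD(x,x)$.

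My plan is to construct an inverse to evaluation at $\id_x$ and verify that the two composites are equivalences. The candidate inverse assigns to $a \in F(x)$ the natural transformation $\eta_a \colon \hom_\cD(x,\blank) \Rightarrow F$ whose component at $y$ sends $f\colon x \to y$ to $F(f)(a)$. After passing to homotopy categories this reduces to the $1$-categorical Yoneda calculation: $\eta_a$ evaluated at $\id_x$ is $F(\id_x)(a)=a$, while naturality of an arbitrary $\alpha \colon \hom_\cD(x,\blank)\Rightarrow F$ forces $\alpha_y(f) = F(f)(\alpha_x(\id_x))$, so $\alpha$ is reconstructed from its value at the universal element. This is enough to see the map is a bijection on connected components for every $F$.

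The principal obstacle is that in the $\infty$-categorical setting one cannot define a natural transformation merely by specifying its components on objects; coherent higher homotopy data must be supplied, and moreover one has to check that the inverse is itself coherently natural in $F$. I would handle this in Joyal's quasicategory model by building the assignment $a \mapsto \eta_a$ explicitly through the twisted arrow construction, as in the proof of \cite[Lemma 5.5.2.1]{LurHTT}: the twisted arrow $\infty$-category $\Tw(\cD)$ and its canonical mapping-space presheaf, together with the straightening/unstraightening equivalence, package all components and their higher compatibilities at once. Because this is a standard foundational input, I would ultimately invoke Lurie's proof rather than redo it from scratch.
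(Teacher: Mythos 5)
Your proposal is correct and matches the paper's approach: the paper gives no proof for this theorem at all, citing Lurie's Lemma 5.5.2.1 directly, and you likewise reduce the statement to the $\infty$-categorical Yoneda lemma for $\cD^{op}$ and ultimately defer to the same reference. Your preliminary discussion (recognising $\hom_\cD(x,\blank)$ as the Yoneda image of $x$ in $\sP(\cD^{op})\simeq\Fun(\cD,\Spc)$, sketching the $1$-categorical argument, and flagging that the higher-coherence data is exactly what Lurie's twisted-arrow argument supplies) is accurate and a useful gloss, though not strictly part of the paper's treatment.
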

We can now use the coYoneda lemma in conjunction with the one-dimensional
cobordism hypothesis to express the moduli spaces $\eTTi$ and $\TTi$
as the space of scalars of $\Bor(S^1)$ and $\Borm$, respectively.
Note, however, that to compute $\eTTi$ we use the
conjectural cobordism hypothesis with singularities,
so warning \ref{warning} applies.
\begin{cor}\label{cor:applyYoneda}
    There are equivalences of spaces 
    \[
        \TTi \simeq \Sc\left(\Bor(S^1)\right) 
        \qand
        \eTTi \simeq \Sc\left(\Borm\right).
    \]
\end{cor}
\begin{proof}
    By corollary \ref{cor:cobhyp-A} of the cobordism hypothesis \ref{thm:cobhyp}, 
    the functor $\mcA^\fd$ is naturally equivalent to 
    the corepresented functor $\hom_{\Catit}(\Bor(S^1), \blank)$.
    Then first equivalence now follows from the coYoneda lemma 
    \ref{thm:coYoneda} for $\cD=\Catit$, $x=\Bor(S^1)$, and $F=\Sc$.
    
    Similarly, the cobordism hypothesis with singularities \ref{conj:singcobhyp} 
    in dimension $1$ says that $\mcE^\fd$  is naturally equivalent to 
    the corepresented functor $\hom_{\Catit}(\Borm, \blank)$.
    Hence the second equivalence also follows from the coYoneda lemma.
\end{proof}

\subsection{Identification of the homotopy-type of the scalars of the bordism category}
We give a more geometric description of $\Sc(\Bor(S^1))$:
\begin{defn}\label{def:Mfd}
    For a topological space $X$, let $\Mfd_d^{X} \subset \Psi_d(X)$ 
    be the space of of closed $d$-manifolds submanifolds $M\subset \cube$
    equipped with an $X$-structure $\gp:M \to X$. 
\end{defn}

\begin{lem}\label{lem:ScMfd}
	There is an equivalence $\Sc(\Bor(X)) \simeq \Mfd_1^{X} $.
\end{lem}
\begin{proof}
    The monodial unit of $\Bor(X)$ % $\Bor(X)^{\gle{0}} \to \Bor(X)^{\gle{1}}$ 
    is given by the empty manifold, so by lemma \ref{lem:functors} 
    the space of scalars is equivalent to $\hom_{\Bor(X)}(\emptyset, \emptyset)$,
    which is indeed equivalent to $\Mfd_d^X$.
\end{proof}

\begin{defn}
    We write $\IT$ for $S^1$ with the usual group structure, 
    i.e.\ $\IT\cong U_1 \cong SO_2$.
    For a space $X\in \Spc$ we denote its \emph{free loop} space by
    \[
        \gL X := \Map(\IT,X).
    \]
    The group $\IT$ acts on this by precomposition and we denote the homotopy-orbits by
    \[
        (\gL X)_{h\IT} := (E\IT \times \gL X)/\IT.
    \]
\end{defn}

The space of closed manifolds is best understood as a free $E_\infty$-algebra
on the space of connected closed manifolds. Recall the following:
\begin{thm}\label{thm:BPQ}
    For a topological space $X$ the 
    underlying space of the free $E_\infty$-algebra on $X$ is 
    given by:
    \[
        \EPi(X) \simeq \coprod_{n\ge 0} X^n_{h\gS_n}.
    \]
\end{thm}
\begin{proof}
    This is a classical theorem by Segal: 
    his proof of the Barratt-Priddy-Quillen theorem in \cite[Proposition 3.5 and 3.6]{Seg74} 
    in fact shows that the unordered configuration space $\Conf_*(\IR^\infty;X)$ is a model for the free $\gC$-space on $X$.
    A modern reference is \cite[Proposition 3.1.3.13]{LurHA}.
\end{proof}

The following is fairly standard, see for instance \cite[Lemma 7.1]{Gia17},
but we give a proof for completeness sake.
\begin{lem}\label{lem:Mfd1X}
	For any topological space $X$ the space of closed $1$-manifolds with map to $X$ is:
	\begin{align*}
        \Mfd_1^{X} \simeq \EPi\left( (\gL X)_{h\IT} \right)
	\end{align*}
	Here, as in theorem \ref{thm:BPQ}, 
	$\EPi(Y)$ denotes the free $E_\infty$-algebra on a space $Y$.
\end{lem}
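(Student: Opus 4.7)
The plan is to proceed in two stages. First, identify the moduli of a single oriented circle with $X$-structure as the Borel construction $(\gL X)_{h\IT}$; second, decompose $\Mfd_1^X$ by connected-component count and recognize the result as a free $E_\infty$-algebra via the same configuration-space argument in $\IR^\infty$ that appeared in the $0$-dimensional cobordism hypothesis (Lemma \ref{lem:0cobhyp}).

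For the single-circle analysis, let $\Mfd_{1,1}^X \subset \Mfd_1^X$ denote the subspace of connected $1$-manifolds with $X$-structure, and consider the forgetful map $\Mfd_{1,1}^X \to \Mfd_{1,1}$. This is a fibration whose fiber over a specific embedded circle $M$ is $\Map(M,X)\simeq \gL X$. The base $\Mfd_{1,1}$ is the moduli of unparametrized oriented embedded circles in $\IR^\infty$; since the space of parametrized embeddings $S^1 \hookrightarrow \IR^\infty$ is contractible, $\Mfd_{1,1} \simeq B\Diff^+(S^1) \simeq B\IT$ using $\Diff^+(S^1)\simeq SO(2)=\IT$. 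The monodromy action of $\IT$ on the fiber $\gL X$ is the standard rotation action on loops, so the total space of the fibration is the Borel construction $(\gL X)_{h\IT}$.

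For the multi-circle analysis, decompose $\Mfd_1^X \simeq \coprod_{n\ge 0}\Mfd_{1,n}^X$ by number of components; then $\Mfd_{1,n}^X$ parametrizes $n$ disjointly embedded oriented circles in $\IR^\infty$ with $X$-structures. The space of ordered $n$-tuples of parametrized disjoint embeddings $\coprod_n S^1 \hookrightarrow \IR^\infty$ is $\gS_n$-equivariantly (weakly) contractible by a general-position argument in the infinite-dimensional target, directly parallel to the argument underlying $C_*(\IR^\infty;Y)\simeq \EPi(Y)$ in Lemma \ref{lem:0cobhyp}. Modding out by $(\Diff^+(S^1))^n \rtimes \gS_n \simeq \IT \wr \gS_n$ and remembering the $X$-structures gives
\[
    \Mfd_{1,n}^X \simeq \bigl((\gL X)_{h\IT}\bigr)^n_{h\gS_n}.
\]
Summing over $n$ and applying the standard formula $\EPi(Y)\simeq \coprod_n (Y^n)_{h\gS_n}$ (as cited in Lemma \ref{lem:0cobhyp}) yields the claimed equivalence.

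The main technical obstacle is the $\gS_n$-equivariant contractibility of the space of $n$ disjointly embedded oriented circles in $\IR^\infty$, i.e., that such configurations form a model for $E\gS_n$. This is not more subtle than the corresponding statement for points: the space of such embeddings is highly connected by general position in the infinite-dimensional ambient, and the $\gS_n$-action permuting components is free. This is essentially the content of the cited \cite[Lemma 7.1]{Gia17}, and once granted, the rest of the argument is a direct assembly.
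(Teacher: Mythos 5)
Your proof is correct and follows essentially the same route as the paper. The paper's version is slightly more streamlined: instead of treating the single-circle case first and then arguing $\gS_n$-equivariant contractibility of the configuration of disjoint embedded circles separately, it directly invokes the fact that $\Emb(M,\IR^{1+\infty})$ is a model for $E\Diff(M)$ for any closed manifold $M$, applied to $M=(S^1)^{\amalg n}$; this gives the needed contractibility and free action of the full wreath product $\Diff^+(S^1)\wr\gS_n$ in one stroke, after which the identification $\bigl(\Map(S^1,X)^n\bigr)_{h(\IT\wr\gS_n)}\simeq\bigl((\gL X)_{h\IT}\bigr)^n_{h\gS_n}$ and the summation over $n$ proceed exactly as you wrote. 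The two-stage presentation you chose is a perfectly valid (and arguably more pedagogical) way to organize the same argument.
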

\begin{proof}
	The space $\Mfd_1$ has a natural $\IN$-grading induced by the number of connected components.
	All closed $1$-manifolds are disjoint unions of circles, 
	so the $n$th level of the grading is space of submanifolds of $\cube$
	that are abstractly diffeomorphic to $(S^1)^{\amalg n}$.
	In the case $X=*$ this space is equivalent to the classifying space 
	$B\Diff^+((S^1)^{\amalg n})$ by \cite[Corollary B.5]{SP17}.
	A similar argument shows that for general $X$ it is 
	the homotopy-orbits of $\Map((S^1)^{\amalg n}, X)$
	with respect to the action of $\Diff^+((S^1)^{\amalg n})$:
	\[
        \Map\left((S^1)^{\amalg n}, X\right)_{h\Diff^+((S^1)^{\amalg n})}.
	\]
	The group $\Diff^+((S^1)^{\amalg n})$ can be decomposed as 
	a wreath product $\gS_n \wr \Diff^+(S^1)$
	acting component-wise $ \Map(S^1,X)^n$.
	Since we are working with homotopy actions, we may replace 
    the group $\Diff^+(S^1)$ by the equivalent group $\IT$.
	Homotopy-orbits with respect to the action of a wreath product can be computed as
	\[
        \left(\Map(S^1,X)^n \right)_{h(\gS_n \wr \IT)} \simeq \left(\left(\Map(S^1,X)_{h\IT}\right)^n\right)_{ h\gS_n}.
	\]
	Putting all the parts of the $\IN$-grading back together we get
	\[
		\Mfd_1^{X} \simeq \coprod_{n \ge 0}  \left(\left(\Map(S^1,X)_{h\IT}\right)^n\right)_{ h\gS_n} \simeq \EPi\left( \Map(S^1,X)_{h\IT} \right)
	\]
    as claimed.
    Here the last equivalence is as in theorem \ref{thm:BPQ}.
\end{proof}

\begin{lem}\label{lem:gL(S^1)}
    In the case $X=S^1$ that is most relevant to us, we compute:
	\[
        \gL(S^1)_{h \IT} \simeq (S^1 \times BS^1) \amalg \coprod_{k \in \IZ\setminus\{0\}} B(\IZ/k\IZ).
	\]
\end{lem}
\begin{proof}
    The space $\Map(S^1,S^1)$ has as connected components the spaces $\Map^k(S^1,S^1)$ of maps with winding number $k$ for $k\in \IZ$.
    We need to compute $\Map^k(S^1,S^1)_{h \IT}$ for all $k \in \IZ$.
    Let $X_k$ be the space $S^1 \subset \IR^2$ with the action of $\IT$ defined by $\gl.\zeta := \gl^k \cdot \zeta$.
    There is an $\IT$-equivariant embedding $\gi:X_k \to \Map^k(S^1,S^1)$ that identifies $X_k$ with the space of degree $k$ maps $S^1 \to S^1$ of constant speed.
    Non-equivariantly $\gi$ is a homotopy equivalence with the inverse $\Map^k(S^1,S^1) \to S^1$ given by evaluation on the base-point.
    Therefore $\gi$ is a Borel weak equivalence and induces an equivalence on the homotopy-orbits:
    \[
        \Map(S^1,S^1)_{h \IT} \simeq \coprod_{k\in \IZ} (X_k)_{h \IT}.
    \]
    To compute $(X_k)_{h \IT}$ for $k \neq 0$ observe that $X_k$ can be thought of as the quotient $S^1/(\IZ/k\IZ)$ with $\IT$-action induced from the standard action of $\IT$ on $S^1$.
    Therefore 
    \begin{align*}
        (X_k)_{h \IT} &= (X_k \times E\IT)/\IT \cong ((S^1/(\IZ/k\IZ)) \times E\IT)/\IT \\
                       &\cong \left((S^1 \times E\IT)/\IT \right)/(\IZ/k\IZ)
    \end{align*}
    The space $(S^1 \times E\IT)/\IT$ is homeomorphic to $E\IT$ and therefore contractible.
    The action of $\IZ/k\IZ$ is free and hence $(X_k)_{h \IT}$ is a model for $B(\IZ/k\IZ)$.

    The remaining case $k=0$ is easy: the space $X_0$ is $S^1$ with the trivial $\IT$-action, therefore the homotopy fixed-points decompose as $(X_0)_{h \IT} \simeq S^1 \times B\IT$.
\end{proof}

\subsection{The space of marked $1$-manifolds}

\begin{lem}\label{lem:ScBfrm}
    There is an equivalence:
    \[
        \Sc(\Borm) \simeq \EPi \left( BS^1 \amalg \coprod_{k = 1}^\infty B(\IZ/k\IZ) \right) .
    \]
\end{lem}
\begin{proof}
    Just like in lemma \ref{lem:ScMfd} we see that the space of scalars $\Sc(\Borm)$
    is equivalent to the space of closed marked oriented $1$-dimensional submanifolds
    of $\cube$.
    By an argument as in proposition \ref{prop:Bordm-CSS} this space is homeomorphic 
    to a Borel construction:
    \[
        \Mfd_1^{\mi{or},m} \cong \coprod_{n=0}^\infty 
        \left(\Emb((S^1)^{\amalg n}, \cube) \times \Conf_*((S^1)^{\amalg n})\right)
        /\Diff^+((S^1)^{\amalg n})
    \]
    Here the diffeomorphism group $\Diff^+(M)$ acts diagonally on 
    $\Conf_*(M) \subset \coprod_{k \ge 0} M^k/\gS_k$.
    The configuration space has the property that there is a canonical homeomorphism
    $\Conf_*(M \amalg M') \cong \Conf_*(M) \times \Conf_*(M')$
    for any two manifolds $M$ and $M'$. In particular we have 
    $\Conf_*((S^1)^{\amalg n}) \cong \prod_{i=1}^n \Conf_*(S^1)$.
    We can therefore argue just like in lemma \ref{lem:Mfd1X} to see that $\Mfd_1^{\mi{or},m}$
    is equivalent to the underlying space of a free $E_\infty$-algebra:
    \[
        \Mfd_1^{\mi{or},m} \simeq \EPi\left(
        \Conf_*(S^1)_{h\IT}
        \right).
    \]
    We are now left to compute the homotopy $\IT$-orbits of $\Conf_*(S^1)$.
    To do so, we can decompose the configuration space by the cardinality
    of the configuration. In the $k=0$ case we have only the empty configuration,
    so $\Conf_0(S^1) = \{\emptyset\}$ is a point and the homotopy orbits are $B\IT$.
    For $k>0$ let, as before, $X_k$ be the space $S^1 \subset \IC$ with the action of $\IT$
    defined by $\gl.\zeta := \gl^k \cdot \zeta$.
    There is an equivariant map $i: X_k \to \Conf_0(S^1)$ which sends 
    $\zeta$ to the subset $\{\xi\;|\; \xi^k = \zeta\} \subset S^1$.
    This map identifies $X_k$ with the subspace of `equally spaced' configurations
    of $k$ points on the circle and therefore is an equivalence. 
    As in lemma \ref{lem:gL(S^1)} we have that $(X_k)_{h\IT} \simeq B\IZ/k$
    and hence the result follows.
\end{proof}

\subsection{Proof of theorem \ref{thm:As}}

\begin{thm}[Theorem A]\label{thm:Ap}
    There is a commutative diagram of spaces:
    \[
        \begin{tikzcd}[row sep=tiny, column sep=small]
            \eTTi \ar[dd] \ar[rr, "\simeq"] \ar[rd] & &
            \EPi(BS^1 \amalg \coprod_{k\ge 1}B\IZ/k\IZ)\ar[dd] \ar[rd] & \\
            & \TTi \ar[dd] \ar[rr, "\simeq", near start] & &
            \EPi(S^1 \times BS^1 \amalg 
                    \coprod_{k\in \IZ\setminus\{0\}}B\IZ/k\IZ)\ar[dd] \\
            \eTTo \ar[rr, "\cong"] \ar[rd] & & \IN[x] \ar[rd] & \\
            & \TTo \ar[rr, "\cong"] & & \IN[x^{\pm 1}]
        \end{tikzcd}
    \]
    where the horizontal maps are equivalences.
    Moreover, the vertical maps identify the sets in the bottom layer
    as the set of connected components of the top layer: 
    $\eTTo \cong \pi_0 \eTTi$ and $\TTo \cong \pi_0 \TTi$.
\end{thm}
\begin{proof}
    The bijections in the bottom row are discussed in proposition \ref{prop:1class} about the classification of the $1$-categorical tracelike transformations.
    For the top row corollary \ref{cor:applyYoneda} identifies $\TTi$ and $\eTTi$
    with $\Sc(\Bor(S^1))$ and $\Sc(\Borm)$, respectively.
    These spaces of scalars are then computed in lemmas 
    \ref{lem:ScMfd}, \ref{lem:Mfd1X}, \ref{lem:gL(S^1)}, and \ref{lem:ScBfrm}.

    Since the $\infty$-categorical classification followed the same steps as the $1$-categorical one, the left-hand square commutes.
    By \ref{nrem:mcL-commutes} the top-right map can be understood
    as $\Sc(L):\Sc(\Bor(S^1)) \to \Sc(\Borm)$.
    Combining the definition of $L$ with the equivalences in lemmas
    \ref{lem:Mfd1X}, \ref{lem:gL(S^1)}, and \ref{lem:ScBfrm} 
    it follows that it is the inclusion
    $BS^1 \cong \{0\} \times BS^1 \inj BS^1$ on the first component
    and the identity on $B\IZ/k \to B\IZ/k$ for $k>0$.
    The right-most vertical map is $\Sc(\Bor(S^1)) \to \Sc(\CoZ)$, 
    which is a bijection on connected components because
    $\CoZ \simeq h_1 \Bor(S^1)$.
    This implies that $\pi_0\eTTi \cong \eTTo$ and by the same argument
    $\pi_0\TTi \cong \TTo$.
\end{proof}
Note that the vertical map sends $S^1\times BS^1$ to $\{x^0\}$ and $B\IZ/k\IZ$ to $\{x^k\}$.

\begin{cor}[{Corollary \ref{cor:Bs}, generalising \cite[Théorème 3.18]{TV15}}] \label{cor:Bp}
    The space of (restricted) $\infty$-cate\-gorical tracelike transformations 
    that act as the $1$-categorical trace $\Tr$ on homotopy categories is contractible.
\end{cor}
\begin{proof}
    Under the equivalences of theorem A, the preimage of $\Tr$ under $\eTTi \to \eTTo$
    is the connected component 
    \[
        B(\IZ/1\IZ) \ \subset \ B\IT \amalg \coprod_{k=1}^\infty B\IZ/k\IZ
        \ \subset \ \EPi\left(B\IT \amalg \coprod_{k=1}^\infty B\IZ/k\IZ\right).
    \]
    But $\IZ/1\IZ$ is the trivial group so this connected component is contractible.
    The same argument applies to the restricted case $\TTi \to \TTo$,
    recovering To\"en an Vezzosi's \cite[Théorème 3.18]{TV15}.
    (Note that by lemma \ref{lem:functors} the definition of $\mcA$ 
    really recovers that of To\"en and Vezzosi.)
\end{proof}

\begin{cor}[Corollary \ref{cor:Cs}]\label{cor:Cp}
    Any $\infty$-categorical tracelike transformation
    whose value on the category of complex vector spaces agrees 
    with the trace from linear algebra is canonically equivalent
    to the $\infty$-categorical trace. 
\end{cor}
\begin{proof}
    By corollary \ref{cor:Bp} the connected component of $\Tr$ 
    in $\eTTi$ is contractible.
    Since we have a full classification of tracelike transformations 
    it is enough to show that for any $\IN$-polynomial 
    $\sum_{i=1}^n x^{k_i}$ the map
    \[
        \gT_{\Vect_k}^{k_1,\dots,k_n}: E^\fd(\Vect_\IC) \to \Sc(\Vect_\IC) \cong \IC, \quad (V,f:V \to V) \mapsto \Tr(f^{k_1}) \cdots \Tr(f^{k_n})
    \]
    is the trace from linear algebra only if $n=1$ and $k_1 = 1$.

    For this purpose consider the $2\times 2$ diagonal matrix $A_\gl$
    with entries $1$ and $\gl\in \IC$:
    \[
        \gT_{\Vect_\IC}^{k_1,\dots,k_n}(\IC^2,A_\gl) = (1 + \gl^{k_1}) \cdots (1 + \gl^{k_n}).
    \]
    Comparing coefficients we see that this can only be equal to $\Tr(A_\gl)=1+\gl$ for all $\gl \in \IC$ if $n=1$ and $k_1=1$.
\end{proof}

    Note that by the same argument corollary \ref{cor:Cs} 
    also holds for restricted tracelike transformations.

\begin{remark}
    The statement of corollary \ref{cor:Cs} remains true if we replace
    $\IC$ by any other infinite field.
    It then however is not sufficient to look only at the matrices $A_\gl$.
    If, for instance, $k$ is of characteristic $3$, then $\gT_{\Vect_k}^{1,0,0}(k^2, A_\gl) = (1 + \gl)\cdot 2 \cdot 2 = 1+\gl$.
    This problem can be solved by considering larger matrices.
    Note that corollary \ref{cor:Cs} is not true for finite fields: 
    over $\mathbb{F}_p$ the tracelike transformations $\gT^{p}$ 
    and $\gT^1=\Tr$ agree. 
\end{remark}

\subsection{The action of $\IN$ and the proof of corollary \ref{cor:Ds}}
We would like to give a purely categorical characterisation of the trace 
among the other tracelike transformations that does not rely on a pre-defined 
notion of trace.
To define the notion of \emph{generating tracelike transformation}
we first need to specify the $\IN$-action outlined in the introduction.

\begin{defn}\label{defn:h-mcP}
    The multiplicative monoid $(\IN,\cdot)$ acts on the sets $\eTTo$ and $\TTo$
    by $(P^n T)(e) := T(e^n)$.
\end{defn}

It is clear that $P^n \circ P^m = P^{nm}$, but we need to briefly check
that $P^n T$ is indeed still conjugation invariant:
\[
    (P^n T)(g^{-1} \circ f \circ g) = T((g^{-1} \circ f \circ g)^n) 
    = T(g^{-1} \circ f^n \circ g) = T(f^n) = (P^n T)(f).
\]
We will now lift the maps $P^n$ to maps $\mcP^n:\pTTi \to \pTTi$ 
of the moduli space of $\infty$-categorical tracelike transformations.
Once could assemble these maps into a coherent action of $(\IN, \cdot)$,
but that is not necessary for our purposes.
Instead we will just check that the $\mcP^n$ induce $P^n$ on
$\pi_0 \pTTi \cong \pTTo$.

\begin{construction}
    Fix $n \in \IN$ and some complete Segal space $X_\cd$.
    There is a diagonal map 
    \[
        \gD_{X_1}^n: X_0 \times_{(X_0)^2}X_1 \to 
        X_0 \times_{(X_0)^2} (X_1 \times_{X_0} \dots \times_{X_0} X_1),
    \]
    where as usual all pullbacks are computed in the infinity category of spaces.
    We can pick a homotopy inverse to the Segal map
    $X_n \to X_1 \times_{X_0} \dots \times_{X_0} X_1$
    and construct the composite
    \[
        p^n_X: \mcE(X) = X_0 \times_{(X_0)^2}X_1 \xrightarrow{\gD_{X_1}^n}
        X_0 \times_{(X_0)^2} (X_1 \times_{X_0} \dots \times_{X_0} X_1)
        \xleftarrow{\simeq}  X_0 \times_{(X_0)^2} X_n
        \xrightarrow{d} X_0 \times_{(X_0)^2} X_1. 
    \]
    Here $d:X_n \to X_1$ is the face operator coming from the long edge $\{0,n\} \subset [n]$.
    This defines a natural transformation $p^n:\mcE \Rightarrow \mcE$
    that commutes with the projection $\mcE(X) \to X_0$.
    Given a symmetric monoidal structure on $X$ the transformation $p^n$
    therefore preserves the subspace $\mcE^\fd(X) \subset \mcE(X)$.
    We can hence define a map
    \[
        \mcP^n:\eTTi \to \eTTi, \quad 
        (T:\mcE^\fd \Rightarrow \Sc) \mapsto (T\circ p^n:\mcE^\fd \Rightarrow \mcE^\fd \Rightarrow \Sc).
    \]
\end{construction}

\begin{lem}
    For $T \in \eTTi$ and $n \in \IN$ the tracelike transformation $\mcP^n T$
    acts as $(\mcP^n T)(e) = T(e^n)$ on the homotopy category.
\end{lem}
\begin{proof}
    The inverse of the Segal map composed with the face map:
    \[
        X_1 \times_{X_0} \dots \times_{X_0} X_1
        \xleftarrow{\simeq}  X_n
        \xrightarrow{d} X_1
    \]
    is given by $(f_1, \dots, f_n) \mapsto f_n \circ \dots \circ f_1$ on the homotopy category.
    Therefore $p_X^n:\mcE(X) \to \mcE(X)$ yields
    \[
        E(h_1X) \to E(h_1X), \quad (x,e) \mapsto (x,e^n).
    \]
    Precomposing with this induces $P^n:\eTTo \to \eTTo$ as described 
    in definition \ref{defn:h-mcP}.
\end{proof}

We can now make the definition from the introduction precise:
\begin{defn}
    A tracelike transformation $T \in \eTTi$ is 
    \emph{generating}
    if the monoid $\pi_0 \eTTi$ is generated by the set
    $\{[\mcP^n(T)]\;|\; n \in \IN\}$.
    Equivalently, $T$ is generating if for every other $S \in \eTTi$
    there are non-negative integers $k_1, \dots, k_n$ such that
    $S$ is equivalent to $\mcP^{k_1}(T) \cdots \mcP^{k_n}(T)$.
\end{defn}

\begin{cor}[Corollary \ref{cor:Ds}]\label{cor:Dp}
    The space of generating tracelike transformations is contractible
    and its image in $\eTTo$ is the classical trace $\Tr$.
\end{cor}
\begin{proof}
    By theorem \ref{thm:Ap} we know that $\pi_0 \eTTi \cong \eTTo$,
    and by corollary \ref{cor:Bp} the fibre of $\Tr$ under the projection
    $\eTTi \to \eTTo$ is contractible.
    It will therefore suffice to show that that $\Tr = \gT^{1}$ is the unique
    generating $1$-categorical tracelike transformation in $\eTTo$.
    
    The action of $P^m$ on $\gT^{k_1,\dots,k_n}$ is given by
    \[
        P^m(\gT^{k_1,\dots,k_n})(e) = \gT^{k_1,\dots,k_n}(e^m) 
        = \Tr((e^m)^{k_1}) \circ \dots \circ \Tr((e^m)^{k_n})
        = \gT^{mk_1, \dots, mk_n}(e).
    \]
    Therefore, under the isomorphism $\eTTo \cong \IN[x]$ of proposition \ref{prop:1class},
    $P^n$ acts on $\IN[x]$ as $x^m \mapsto x^{nm}$.
    It is clear that $x$ is the only element $y \in \IN[x]$ such that
    $\{P^n(y)\;|\;n \in \IN\}$ generates the monoid $\IN[x]$,
    and hence the same holds for $\Tr \in \eTTo$.
\end{proof}

%---------------------------------------------------------------------------------------------------------
%---------------------------------------------------------------------------------------------------------
\section[Application: the cyclic group action on Tr(fk) and THH]{Application: the cyclic group action on $\Tr(f^k)$ and THH}

In this section we study the other connected components of the moduli space 
$\eTTi$ of tracelike transformations, which we computed in Theorem \ref{thm:As}.
We will see that while $(e \mapsto \Tr(e))$
uniquely lifts to a $\infty$-categorical tracelike transformation, 
$(e \mapsto \Tr(e^k))$ has `$B(\IZ/k\IZ)$-many lifts'.
This induces a coherent $\IZ/k\IZ$-action on $\Tr(e^k)$.
As an example we will look at the derived Morita category
where the trace is given by topological Hochschild homology (THH).

\begin{construction}
    For every symmetric monodial \icat\ $\cC$ there is a canonical map
    $
        \eTTi \to \Map(\mcE^\fd(\cC),\Sc(\cC))
    $
    and by adjunction a map $\mcE^\fd(\cC) \to \Map(\eTTi, \Sc(\cC))$.
    Given a dualisable object $x \in \cC$ we can moreover compose this
    with the canonical map $\End_\cC(x) \to \mcE^\fd(\cC)$ to obtain:
    \[
        \theta: \End_\cC(x) \to \Sc(\cC)^{\eTTi} .
    \]
    Under the equivalence of theorem \ref{thm:As} we in particular have
    \[
        \theta_0: \End_\cC(x) \to \Sc(\cC)^{B\IT} 
        \qand
        \theta_k: \End_\cC(x) \to \Sc(\cC)^{B\IZ/k\IZ} 
        \text{ for } k >0 .
    \]
\end{construction}

\begin{nrem}
    As part of theorem \ref{thm:As} we know that the effect of 
    $\theta_k$ on the homotopy category is:
    \[
        \theta_k(x,e) = \Tr(e^k).
    \]
    Since giving a map $B\IZ/k\IZ \to \c{E}$ into an $\infty$-groupoid $\c{E}$
    is equivalent to giving an object of $\c{E}$ with a coherent 
    $\IZ/k\IZ$-action, the above construction shows that $\Tr(e^k)$ 
    comes with a natural action of $\IZ/k\IZ$ when thought of as an object in the
    $\infty$-groupoid $\Sc(\cC)$.
    For $k=0$ we have that $\Tr(e^0) = \Tr(\id_x)$ and the corresponding
    connected component of $\eTTi$ is $B\IT$, so $\Tr(\id_x)$ has 
    an action of the circle group $\IT$.
\end{nrem}
    
Note that all of these actions are trivial when $\cC$ is a $1$-category, 
since then $\Sc(\cC)$ is just a set and hence discrete as an $\infty$-groupoid.
We can therefore not observe the group actions in the category of vector spaces.
Instead we consider the following example: 
\begin{example}
    Consider the stable Morita category $\cc{Morita}$ as defined 
    in \cite[4.8.4.9]{LurHA} where objects are $E_1$-ring spectra $A,B,C, \dots$,
    morphisms $A \to B$ are $(A,B)$-bimodule spectra and composition
    of morphisms $M:A \to B$ and $N:B \to C$ is given by the relative
    tensor product $(M \ot_B N):A \to C$.
    This category is symmetric monoidal with respect to the 
    smash product $\ot = \wedge$ on spectra.
    
    In this category all objects $A$ are dualisable with dual $A^{op}$.
    Evaluation and coevaluation are given by $A$ thought of as a
    $(A^{op} \ot A, \IS)$ and $(\IS, A \ot A^{op})$-bimodule, respectively.
    The trace of an $(A,A)$-bimodule $M:A \to A$ is therefore given by 
    the relative tensor product:
    \[
        \Tr(M) \simeq A \ot_{A \ot A^{op}} M \ot_{A \ot A^{op}} A
        \simeq A \ot_{A \ot A^{op}} M.
    \]
    This, by definition, is the \emph{topological Hochschild homology}
    $\op{THH}(A; M)$ of $A$ with coefficients in $M$.
    
    The scalars of $\cc{Morita}$ form the $\infty$-groupoid of 
    $(\IS, \IS)$-bimodules, i.e.\ the maximal subgroupoid 
    of the category of spectra: $\Sc(\cc{Morita}) \simeq \Sp^\simeq$.
    Our construction of $\theta_k$ induces a natural
    $\IZ/k\IZ$-action on $\op{THH}(A; M \ot_A \dots \ot_A M)$
    and a natural $\IT$-action on $\op{THH}(A) = \op{THH}(A; A)$.
\end{example}

\begin{remark}
    There is a well-know $\IT$-action on $\op{THH}(A)$,
    which we expect to be equivalent to the above one. 
    Let us briefly sketch how one could go about proving this:
    
    Given a ring spectrum $A$, Scheimbauer in her thesis \cite{Sch14}
    uses factorization homology to construct a symmetric monoidal 
    functor $\mcZ: \Bor \to \c{Morita}$ that sends the positive point to $A$.
    We can compose this with the functor that forgets markings 
    to obtain $\mcZ': \Borm \to \Bor \to \c{Morita}$.
    By naturality, the $\IT$-action on $\gt_0(A)$ then has to be given by
    $\mcZ': B\IT \subset \Sc(\Borm) \to \Sc(\c{Morita})$.
    From Scheimbauer's construction we see that this is 
    the usual action of $\IT$ on the factorization homology $\int_{S^1} A$ 
    via $\IT \curveto S^1$.
    The claim therefore follows from the folklore theorem 
    that there is a $\IT$-equivariant equivalence 
    $\int_{S^1} A \simeq \op{THH}(A)$, see for instance \cite{AMGR17}.
\end{remark}

\begin{remark}
    Lindenstrauss and McCarthy construct a $\IZ/n\IZ$-equivariant 
    spectrum $U^n(F,P)$ for every ring spectrum $F$ and bimodule $P$, 
    see \cite[Definition 4.1]{LMc12}. By \cite[Lemma 2.6]{LMc12} 
    this spectrum is non-equivariantly equivalent to 
    $\op{THH}(F; P \ot_F \dots \ot_F P)$.
    We expect that the this induces the same $\IZ/n$-action 
    as one obtains form the tracelike transformation $e \mapsto \Tr(e^n)$
    as described above. 
    One could also attempt to prove this in a similar way 
    as indicated for $\gt_0$. However, one would now need
    a version of stratified factorization homology to construct
    the relevant functor $\Borm \to \c{Morita}$.
\end{remark}

%---------------------------------------------------
%-------------END OF SOURCE FILE--------------------
%---------------------------------------------------

% \printbibliography[heading=bibintoc]

\bibliography{literature}{}
\bibliographystyle{alpha}
 
\end{document}